\newtheorem{theorem}{Theorem}[section]
\newtheorem{proposition}{Proposition}[section]
\newtheorem{lemma}{Lemma}[section]
\theoremstyle{definition}
\newtheorem{remark}{Remark}[section]
\numberwithin{equation}{section}
\newdimen\vintbar
\def\vint{-\kern-\vintbar\int}
\def\A{\mathcal A}
\def\E{\mathbb E}
\def\e{\epsilon}
\def\F{\mathcal F}
\def\I{\mathcal I}
\def\L{\mathcal L}
\def\P{\mathbb P}
\def\Q{\mathcal Q}
\def\R{\mathbb R}
\def\0{\boldsymbol 0}
\renewcommand{\l}{\left}
\renewcommand{\r}{\right}
\begin{document}

\modulolinenumbers[1]
%\linenumbers

\SetWatermarkText{} %Set Watermark text
\SetWatermarkFontSize{30cm}
\SetWatermarkScale{4}
\SetWatermarkLightness{0.8}

\title[Optimal switching under incomplete information]{A Brownian optimal switching problem\\under incomplete information}

%\author[Olofsson]{Marcus Olofsson}
\address{Marcus Olofsson\\Department of Mathematics, Uppsala University\\
S-751 06 Uppsala, Sweden}
\email{marcus.olofsson@math.uu.se}
%}

\begin{abstract}
\medskip In this paper we study an incomplete information optimal switching problem in which the manager only has access to noisy observations of the underlying Brownian motion $\{W_t\}_{t \geq 0}$. The manager can, at a fixed cost, switch between having the production facility open or closed and must find the optimal management strategy using only the noisy observations. Using the theory of linear stochastic filtering, we reduce the incomplete information problem to a full information problem, show that the value function is non-decreasing with the amount of  information available, and that the value function of the incomplete information problem converges to the value function of the corresponding full information problem as the noise in the observed process tends to $0$.

%\noindent
%\newline
%2000  {\em Mathematics Subject Classification.}
\noindent
\medskip
\noindent
\newline
{\it Keywords and phrases: optimal switching problem,  stochastic filtering, incomplete information.}
\end{abstract}

\author{Marcus Olofsson}\thanks{This work was partially financed
by Jan Wallanders och Tom Hedelius Stiftelse samt Tore Browaldhs Stiftelse
through the project {\it Optimal switching problems and their applications in economics and finance}, P2010-0033:1.}

\maketitle
%\begin{center} \today
%\end{center}

\section{Introduction}

In this paper we study the two mode optimal switching problem (OSP) under incomplete information. The manager can choose to be either in state $0$ or state $1$. In state $1$ the production facility is open and one continuosly collects the stoachstic revenue $X_t$. In state $0$, the facility is closed and generates no revenue.
% (In terms of optimal switching, the payoff functions in modes $0$ and $1$ are $f_0=0$ and $f_1(x)=x$ respectively.) 
The task of the manager is to choose when to open/close the facility in order to generate maximum profit. If the process $X_t$ is fully observed and there is no cost of switching between 'open' and 'closed', it is intuitevly clear that the manager should let the facility be open when $X_t \geq 0$ and closed when $X_t < 0$. However, when costs are associated to opening or closing, it may be more profitable to leave the plant open for a while even if $X_t <0$ (or, simliarly, leave it closed even if  $X_t \geq 0$). Furthermore, if the process $X_t$ cannot be fully observed, the manager has to base her decisions on incomplete information of the underlying process $X_t$. %, information given by an fully observable $X$-dependent process $\xi$.

The problem of optimal switching under full information has been widely studied during the last decades and mainly three different approaches have turned out to be useful, two based on stochastic techniques and one on deterministic ditto. Firstly, Snell envelopes was used by \cite{DHP10} in combination with a verification theorem to prove the existence of a unique value function. %Their existence proof is based on an iterative construction of Snell envelopes, iterative with respect to the number of switches allowed.
Secondly, the optimal switching problem can be tackled using reflected backward stochastic differential equations (BSDEs) and we refer to \cite{DHP10}, \cite{HT07}, \cite{HZ10}, and the references therein for more on this reflected BSDE approach. For the connection between Snell envelopes and BSDEs we refer to \cite{EKPPQ97}. Thirdly, the optimal switching problem has been studied using deterministic methods based on systems of variational inequalities. More on this deterministic approach to the OSP can be found in \cite{AH09}, \cite{HM12}, \cite{LuNO12}, \cite{TY93} and the references therein. When dealing with the incomplete information optimal switching problem (IIOSP) considerably less work seems to be done and the author is only aware of the paper \cite{LiNO14a} %and \cite{LiNO14b}
, in which the authors formulate and develop numerical methods for this type of problem. Under incomplete information and when using the formalism of stochastic filtering as in \cite{LiNO14a}% and \cite{LiNO14b}
, the IIOSP is in general not analytically tractable and numerical methods seem inevitable. However, in this paper we derive some analytical results concerning a simplistic model of the IIOSP introduced in \cite{LiNO14a}. To be more specific, we study the IIOSP in a setting similar to that of \cite{F78}, in which the related problem of incomplete information optimal stopping is studied. We prove that the value function is convex, non-decreasing in the amount of available information and that it converges to the value function of the standard OSP as the noise in the observation tends to $0$. %Furthermore, we prove that the decision to switch state is delayed when information is incomplete.
We emphasize that, in the current situation, the analytical approach of this paper is mainly conceptual while the numerical method developed in \cite{LiNO14a} %and \cite{LiNO14b} 
deal with more general IIOSPs and is readily available for applications.
% We prove that  \textbf{and that the lack of information delays decision making}. We refer to the bulk of the paper for more precise statements of these results.

The rest of this paper is organized as follows. 
%In section \ref{sec:preliminaries} we state som preliminary results and give the deterministic solution to the full information OSP. 
In section \ref{sec:assandnot} we formulate the IIOSP, state precise assumptions and present some preliminaries. Section \ref{sec:mainresults} contains the main results while section \ref{sec:proofs} is devoted to proving these. In section \ref{sec:numerical} we give a numerical example of the problem studied in this paper. Finally, we end with section \ref{sec:conclusions} containing conclusions and some future lines of research.

\setcounter{equation}{0} \setcounter{theorem}{0} \setcounter{definition}{0}
\section{Problem formulation and preliminaries} \label{sec:assandnot}
\subsection{Full information optimal switching problems}
We begin by briefly outlining the standard OSP under full information. Let $X=\{X_t\}_{t \geq 0}$ be an $m$-dimensional stochastic process 
\begin{equation*}%\label{eq:generalunderlying}
dX_t = b(X_t) dt + \sigma(X_t) dW_t \hspace{1cm} X_t=x
\end{equation*}
%where $W_t$ is an $n$-dimenstional Brownian motion and 
%\begin{equation}\label{eq:generalunderlyingcoeff}
%\mbox{$b: \R^m \to \R^m, \sigma : \R^m \to R^{m\times n} $ are Lipschitz continuous functions.}
%\end{equation}a
and denote the infinitesimal generator of $X$ by $\L$. 
%Note that under the above assumptions on $b$ and $\sigma$ there exists a unique strong solution to \eqref{eq:generalunderlying}. 
Let $\Q:=\{0,1,\dots, d\}$ be the finite set of available states. A management strategy is a combination of a non-decreasing sequence of stopping times $\{\tau_k\}_{k\geq 0}$, where, at time $\tau_k$, the manager decides to switch production from its current mode to another one, and a sequence of indicators $\{\gamma_k\}_{k\geq 0}$, taking values in $\Q$, indicating the mode to which the production is switched. At $\tau_k$  the production is switched from mode $\gamma_{k-1}$ to $\gamma_k$.  The cost of switching from state $i$ to state $j$ at time $s$ is denoted $c_{ij}(s,X_s)$. A strategy $(\{\tau_k\}_{k\geq 0},\{\gamma_k\}_{k\geq 0})$ can be represented by the simple function
$$
\mu_s=\sum_{i\geq 1} \gamma_i \chi_{(\tau_i, \tau_{i+1} ]}(s) + \gamma_0 \chi_{[\tau_0, \tau_1 ]}(s)
$$
indicating the current state of the facility. We will throughout the paper alternate between these two notations without further notice. If the facility is in state $i$ at time $s$ the genrated revenue per unit time is $\psi_i(s,X_s)$. Hence, when the production is run under a strategy $\mu$ over a finite horizon $[t,T]$, the expected total profit is
\begin{eqnarray*} %\label{eq1}
J(t,x,\mu)=E\biggl [\biggl(\int_t^T\psi_{\mu_s}(s,X_s)ds-\sum_{t\leq \tau_k \leq T}c_{\gamma_{{k-1}},\gamma_{k}}(\tau_{k},X_{\tau_{k}})\biggr )\biggr ].
\end{eqnarray*}
The task in the OSP is to find the value function 
\begin{equation}\label{eq:FIproblem}
v(t,x)= \sup_{\mu \in \A^X_{t,i}} J(t,x,\mu)
\end{equation}
where $\A^X_{t,i}$ denotes the set of strategies adapted to the filtration generated by $X$ which are in state $i$ at time $t$. 
%Before stating the main results of OSP, we introduce the so called ``Switching regions", $S_i$ and ``Continuation regions", $C_i$, defined as
%\begin{align}
%C_i &= \l \{ (t,x) \in [0,T] \times \R : v^\e_i > v^\e_j - c_{ij} \r\} &\notag \\
%S_i &= \l \{ (t,x) \in [0,T] \times \R : v^\e_i = v^\e_j - c_{ij} \r\} &
%\end{align}
%for all $i \in \Q$. 

Under sufficient regularity conditions on the payoff functions $\psi_i$ and switching costs $c_{ij}$ and the so called ``no-loop condition'' the following theorems can be proven, see \cite{DHP10}. 
\begin{theorem} \label{thm:varineq}
The vector of value functions $(v_1(t,x), \dots, v_d(t,x))$ solves the system of variational inequalities
\begin{align*}
&\min \bigg \{v_i  - \max_{i \neq j} \l( v_j(t,x) - c_{ij}(t,x) \r), -\partial_t v_i(t,x) - \L v_i(t,x) - \psi_i(t,x) \bigg \} =0, \\
&v_i(T,x)= 0,
\end{align*}
in the viscosity sense.\footnote{For the definition of viscosity solutions see \cite{CIL92}.} Furthermore, $(v_1(t,x), \dots, v_d(t,x))$ is the unique solution satisfying the polynomial growth condition
$$
v_i(t,x) \leq C(1+|x|^\eta), 
$$
$i\in \Q$, for some $\eta \geq 1$.
\end{theorem}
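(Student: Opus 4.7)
The plan is to establish both existence (that the value functions solve the system in the viscosity sense) and uniqueness via a comparison principle, following the standard optimal switching framework.

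First, I would prove a dynamic programming principle (DPP) for each $v_i$: for every stopping time $\theta \in [t,T]$,
\begin{equation*}
v_i(t,x) = \sup_{\mu \in \A^X_{t,i}} E\biggl[\int_t^{\theta} \psi_{\mu_s}(s,X_s)\,ds - \sum_{t \leq \tau_k \leq \theta} c_{\gamma_{k-1},\gamma_k}(\tau_k,X_{\tau_k}) + v_{\mu_\theta}(\theta, X_\theta)\biggr].
\end{equation*}
This is the essential tool and uses the no-loop condition together with the Markov property. From the DPP one reads off two elementary facts: (i) by choosing to switch immediately to any $j \neq i$ one obtains $v_i(t,x) \geq v_j(t,x) - c_{ij}(t,x)$, and (ii) by choosing $\mu \equiv i$ on $[t,\theta]$ one obtains the supermartingale-type inequality that forces $-\partial_t v_i - \L v_i - \psi_i \geq 0$ in the viscosity sense.

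Next, to prove the variational inequality, I would argue as follows. Let $(t_0,x_0)$ be a point where $v_i(t_0,x_0) > \max_{j\neq i}(v_j(t_0,x_0) - c_{ij}(t_0,x_0))$; by continuity the strict inequality persists on a small neighbourhood, so in the DPP it is suboptimal to switch immediately, and by a standard argument with a smooth test function $\varphi$ touching $v_i$ from above/below and applying Itô's formula to $\varphi(s,X_s)$ on the stopping time $\theta = \inf\{s \geq t_0 : (s,X_s) \notin B\}$, I would obtain the subsolution/supersolution inequality $-\partial_t \varphi(t_0,x_0) - \L \varphi(t_0,x_0) - \psi_i(t_0,x_0) \leq 0$ (resp.\ $\geq 0$). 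Combined with (i)--(ii) above, this yields the $\min\{\cdot,\cdot\}=0$ equation in the viscosity sense. Continuity of $v_i$, needed for the argument, would be established from continuity of the coefficients and standard flow estimates for $X$. The terminal condition $v_i(T,x)=0$ is immediate from the definition since the set of admissible strategies on $[T,T]$ contributes no revenue and any terminal switch can be avoided.

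For uniqueness in the polynomial-growth class, the key step is a comparison principle for the coupled system. I would assume two sub/supersolutions $(u_i)$ and $(w_i)$ with polynomial growth, and argue by contradiction on $M = \max_i \sup(u_i - w_i)$. The standard doubling-of-variables trick applied componentwise, combined with a penalisation by $e^{\lambda t}(1+|x|^{2\eta})$ to handle unbounded domains, reduces matters to ruling out the possibility that the maximum is attained where $u_i = \max_{j\neq i}(u_j - c_{ij})$ --- this is the genuinely delicate part, since the obstacle couples the $d$ components. The no-loop condition is exactly what prevents an infinite chain of such reductions from producing a self-referential loop; after finitely many steps one lands in a component where the HJB inequality is active and obtains the usual contradiction from the viscosity inequalities. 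This coupled comparison argument is where I expect the main technical obstacle to lie, and it is precisely the step for which I would lean on \cite{DHP10}.
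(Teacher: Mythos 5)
This theorem is not proved in the paper at all: it is stated as background, imported verbatim from \cite{DHP10} with the remark that it holds ``under sufficient regularity conditions on the payoff functions $\psi_i$ and switching costs $c_{ij}$ and the so called no-loop condition.'' So there is no in-paper proof to compare against; what can be said is whether your sketch is a faithful outline of the standard argument in the cited literature. It is. The route you take --- dynamic programming principle, reading off the supersolution inequalities from immediate switching and from holding the mode fixed, the test-function/It\^o argument on the set where the obstacle is strict, and a doubling-of-variables comparison principle for the coupled system --- is the approach of Tang--Yong \cite{TY93} and El-Asri--Hamad\`ene \cite{AH09}; the paper's actual reference \cite{DHP10} instead first \emph{constructs} the value functions via Snell envelopes (equivalently, reflected BSDEs) and a verification theorem, which delivers existence, continuity, and the DPP as byproducts before the viscosity characterisation is derived. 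That difference matters for rigour: in your version the DPP and the continuity of $v_i$ are the load-bearing unproved inputs (a rigorous DPP for switching strategies needs either measurable selection or exactly the Snell-envelope machinery you are bypassing), whereas in \cite{DHP10} they come for free from the construction. You also correctly identify that the genuinely delicate step is the comparison principle for the coupled obstacle system, where the no-loop condition on the $c_{ij}$ is used to terminate the chain of reductions $u_i = \max_{j\neq i}(u_j - c_{ij})$ after finitely many steps; deferring that to \cite{DHP10} is reasonable, since the paper itself defers the entire theorem there. In short: your sketch is a correct outline of a legitimate (and in the literature, standard) proof, with the two hard technical points --- the DPP/continuity and the coupled comparison --- named but not carried out, which is the same level of detail at which the paper treats the result.
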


\begin{theorem} \label{thm:finstrategy} %[Proposition 1 and Theorem 1 of \cite{DHP10}] \label{thm:finstrategy}
There exists a finite strategy $\mu^\ast \in \A^X_{t,i}$ such that $J(t,x,\mu^\ast) \geq J(t,x,\mu)$ for any $\mu \in \A^X_{t,i}$.
\end{theorem}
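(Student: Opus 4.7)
The plan is to construct $\mu^*$ explicitly from the value function $v=(v_0,\ldots,v_d)$ of Theorem \ref{thm:varineq}, and then verify both (i) optimality and (ii) finiteness of the constructed strategy. Given $\gamma_0=i$, I would define inductively a sequence of candidate switching times and indicator random variables. Set $\tau_0^*=t$, and for $k\ge 1$ let
\begin{equation*}
\tau_k^* = \inf\l\{ s \in [\tau_{k-1}^*,T] \,:\, v_{\gamma_{k-1}^*}(s,X_s) = \max_{j \neq \gamma_{k-1}^*}\l(v_j(s,X_s) - c_{\gamma_{k-1}^* j}(s,X_s)\r)\r\},
\end{equation*}
with the convention $\inf \emptyset = T$, and let $\gamma_k^*$ be any measurable selection of the argmax on the right. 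By the variational inequality and standard measurable selection, these $\tau_k^*$ are stopping times with respect to the filtration of $X$ and the $\gamma_k^*$ are $\F_{\tau_k^*}$-measurable, so $\mu^* = (\{\tau_k^*\},\{\gamma_k^*\})$ is admissible once we show that only finitely many $\tau_k^*$ lie in $[t,T]$.

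For optimality, the idea is a standard verification argument exploiting that $v$ is a viscosity solution of the system in Theorem \ref{thm:varineq}. On each interval $[\tau_{k-1}^*,\tau_k^*)$ we are in the continuation region of mode $\gamma_{k-1}^*$, so $\partial_t v_{\gamma_{k-1}^*} + \L v_{\gamma_{k-1}^*} + \psi_{\gamma_{k-1}^*} = 0$; at $\tau_k^*$ the obstacle is active, so $v_{\gamma_{k-1}^*}(\tau_k^*,X_{\tau_k^*}) = v_{\gamma_k^*}(\tau_k^*,X_{\tau_k^*}) - c_{\gamma_{k-1}^*\gamma_k^*}(\tau_k^*,X_{\tau_k^*})$. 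Applying Ito's formula (in the appropriate viscosity/Sobolev sense, or first after a smooth mollification and then passing to the limit) to $v_{\mu^*_s}(s,X_s)$ between $t$ and $T$, telescoping the jumps at the $\tau_k^*$, and using $v_i(T,x)=0$, one obtains $v_i(t,x) = J(t,x,\mu^*)$. For a generic admissible $\mu$, the same computation yields an inequality $v_i(t,x) \ge J(t,x,\mu)$ because the obstacle condition $v_{\gamma_{k-1}} \ge v_{\gamma_k} - c_{\gamma_{k-1}\gamma_k}$ holds everywhere and the HJB term $\partial_t v_{\mu_s} + \L v_{\mu_s} + \psi_{\mu_s} \le 0$ holds in the viscosity sense throughout. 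Together this gives $J(t,x,\mu^*) \ge J(t,x,\mu)$ for every admissible $\mu$.

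The hardest step, and the one I expect to be the real obstacle, is showing that $\mu^*$ has only finitely many switches on $[t,T]$ almost surely. This is exactly where the no-loop condition is needed: for any loop $i_1 \to i_2 \to \cdots \to i_n \to i_1$ of distinct intermediate modes, the total switching cost is strictly positive, bounded below by some $\delta>0$ uniformly on compact sets. Suppose, for contradiction, that $\P(N=\infty)>0$ where $N=\#\{k : \tau_k^* \le T\}$. Since $\Q$ is finite, on the event $\{N=\infty\}$ some mode is revisited infinitely often, which forces infinitely many disjoint loops and hence cumulative switching cost $+\infty$. But the identity $v_i(t,x) = J(t,x,\mu^*)$ together with the polynomial growth bound from Theorem \ref{thm:varineq} implies $\E\l[\sum_{\tau_k^*\le T} c_{\gamma_{k-1}^*\gamma_k^*}(\tau_k^*,X_{\tau_k^*})\r] < \infty$, a contradiction. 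Hence $N<\infty$ a.s., and $\mu^*$ is the desired finite optimal strategy.
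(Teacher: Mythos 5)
First, a point of reference: the paper does not prove Theorem \ref{thm:finstrategy} at all --- it is quoted from \cite{DHP10}, where the existence of a finite optimal strategy is obtained by a Snell envelope argument, not by the PDE verification route you take. Your proposal is therefore necessarily a different proof, and as written it has two genuine gaps. The first is the application of It\^o's formula to $v_{\mu^*_s}(s,X_s)$. Theorem \ref{thm:varineq} only provides a \emph{viscosity} solution, and viscosity solutions do not admit It\^o's formula without further regularity. Your parenthetical ``mollify and pass to the limit'' does not repair this: mollifying a viscosity solution does not yield approximate sub/supersolutions with controllable error terms for the obstacle problem, and the pointwise identity $\partial_t v_{\gamma^*_{k-1}} + \L v_{\gamma^*_{k-1}} + \psi_{\gamma^*_{k-1}} = 0$ on the continuation region, which your telescoping computation needs as an \emph{equality}, is not part of the viscosity formulation (which only gives one-sided inequalities against test functions). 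This is precisely why \cite{DHP10} works probabilistically: the dynamic programming identity between consecutive switching times comes for free from optimal stopping theory, with no smoothness of $v$ required.

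The second gap is a circularity in the finiteness step. You deduce $\E\bigl[\sum_{\tau_k^*\le T} c_{\gamma^*_{k-1}\gamma^*_k}(\tau^*_k,X_{\tau^*_k})\bigr]<\infty$ from the identity $v_i(t,x)=J(t,x,\mu^*)$ --- but $J(t,x,\mu^*)$ is only well defined, and your telescoping argument only terminates, once you already know that $\mu^*$ makes finitely many switches, which is what you are trying to prove. The standard repair is to run the verification (or Snell envelope) identity only up to the $n$-th switching time, obtaining for every finite $n$ an identity of the form $v_i(t,x)=\E\bigl[\int_t^{\tau^*_n}\psi_{\mu^*_s}\,ds-\sum_{k\le n}c_{\gamma^*_{k-1}\gamma^*_k}+v_{\gamma^*_n}(\tau^*_n,X_{\tau^*_n})\bigr]$; the polynomial growth bound on $v$ from Theorem \ref{thm:varineq} together with integrability of $\sup_{s\le T}|X_s|$ then bounds $\E\bigl[\sum_{k\le n}c_{\gamma^*_{k-1}\gamma^*_k}\bigr]$ uniformly in $n$, and positivity of the costs (or the no-loop condition in the general multi-mode case) forces $N<\infty$ a.s. With that restructuring, and with the It\^o step replaced by the Snell envelope machinery, your outline essentially becomes the proof of \cite{DHP10}; as it stands, both the regularity step and the finiteness step would fail.
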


For future reference we introduce the regions $C_i$ and $S_i$, $i\in \Q$, defined as
\begin{align*}%\label{eq:swregions}
C_i &= \big \{ (t,x) \in [0,T] \times \R : v_i(t,x) > v_j(t,x) - c_{ij}(t,x) \big\}, &\notag \\
S_i &= \big \{ (t,x) \in [0,T] \times \R : v_i(t,x) =\max _{j \in \Q \setminus \{i \}} \l \{ v_j(t,x) - c_{ij}(t,x) \r \} \big\}. &
\end{align*}
The sets $S_i$ and $C_i$ are usually refered to as ``Switching regions'' and ``Continuation regions'', respectively. It is optimal to switch from region $i$ when the underlying process $(t,X_t)$ hits the switching region $S_i$, see \cite{DHP10}.

\subsection{Incomplete information optimal switching problems}
In contrast to the OSP outlined above, the manager in an IIOSP only has access to incomplete information about the underlying process $X$, information acquired through a fully observable $X$-dependent process $Y$. In particular, the manager can only observe the process  $Y$, solution to the stochastic differential equation
\begin{equation*}%\label{e-SDE+}
dY_s = h(X_s)ds+dU_s, \hspace{1cm}Y_{t}=y,
\end{equation*}
where $U$ is a Brownian motion independent of $W$, and she must base her decisions solely on the information contained in $\F^Y$, the $\sigma$-algebra generated by $Y$. This lack of information prevents us from directly applying results concerning standard OSP.

 In the theory of stochastic filtering, and we refer to \cite{BC09} for details concerning this, the main goal is to compute conditional expectations $\mathbb E\left[\phi(X_t)|\F^{Y}_{t}\right]$ for suitably chosen test functions $\phi$. The solution to the stochastic filtering problem is the distribution of the random variable $X_t$ conditional on the $\sigma$-algebra $\F^{Y}_t$. We let $\pi_t$ denote this distribution and hence
\begin{equation}\label{eq:pi}
\mathbb E\left[\phi(X_t)|\F^Y_t\right] = \int_{\mathbb{R}^{m}}\phi(x)\pi_t(dx) \triangleq \pi_t(\phi).
\end{equation}
Based on the solution $\pi_t$ of the stochastic filtering problem we define, following \cite{LiNO14a}, the expected total profit when the production is run under an $\F^Y_t$-adapted strategy $\mu^Y=(\{\tau^Y_k\}_{k\geq 0},\{\xi^Y_k\}_{k\geq 0})$, over a finite horizon $[0,T]$, to be
\begin{equation*}%\label{eq1lu}
\tilde J(\mu^Y)=\mathbb E\biggl [\biggl(\int_0^T\mathbb E\bigl[\psi_{\mu^Y_s}(s,X_s)|\F^{Y}_{s}\bigr]ds-\sum_{k\geq 1}\mathbb E\bigl[c_{\xi^Y_{{k-1}},\xi^Y_{k}}( \tau^Y_{k},X_{\tau^Y_k})|\F^{Y}_{\tau^Y_{k}}\bigr]\biggr )\biggr ].
\end{equation*}
%We are interested in finding an optimal management strategy $\mu^\ast=(\{\tau_k^\ast\}_{k\geq 0},\{\xi_k^\ast\}_{k\geq 0})$ which maximizes $\tilde J(\mu)$. 
Let for $t\in[0,T]$, $i\in \Q$, $\mathcal A_{t,i}^Y$ denote set of $\F_t^Y$-adapted strategies such that $\tau_1 \geq t$ and $\xi_0=i$ a.s. and recall the notation introduced in \eqref{eq:pi}. Given $t \in [0,T]$ and a probability measure $\hat \pi$, we define $v_i(t, \hat \pi)$, the value function associated with the IIOSP, to equal
\begin{equation}\label{eq:valuefcnpartial}
\sup_{\mu\in \mathcal A^Y_{t,i}} \E \l [\int _t ^T \pi_s \l ( \psi_{\mu_s}(s,X_s) \r ) ds - \sum _{ t \leq \tau_n \leq T} \pi_{\tau_k} \l
 ( c_{\xi_{n-1}\xi_n} (\tau_k, X_{\tau_k}) \r ) \, \vline \, \pi_t=\hat \pi \right ].
\end{equation}
The function $v_i(t, \hat \pi)$ stands for the optimal expected profit if, at time $t$,
the production is in mode $i$ and the probability distribution of the unknown $X_t$ is $\hat \pi$. 

\subsection{Assumptions}
We now turn to the specific setup studied in this paper.  We let $(\Omega, \F, (\F_t)_{t\geq 0}, \P)$ be a complete filtered probability space on which we define two independent Brownian motions $(W_t, \F_t)$ and $(\bar W_t, \F_t)$. The notation $W^{x}$ is used to indicate that the Brownian motion is started from the point $x$ at time $t=0$. The assumption that the initial observation is made at time $t=0$ is made w.l.o.g., see Remark 3.1 of \cite{LiNO14a}. We let $W^x$ be the underlying stochastic process and let, for an arbitrary but fixed $\e > 0$, the process $\xi^\e$ defined by,
\begin{equation*} %\label{eq:observable}
d\xi^\e_t = W^x_t dt + \e d \bar W_t, \hspace{1cm} \xi_0^\e =x,
\end{equation*}  
represents noisy observations of $W$. We denote by $\{\F^\e_t\}_{t\geq 0}$ the filtration generated by $\xi^\e$. We stress that $\F_t \not \subset \F^\e_t$ and consequently the value of $W^x_t$ is not known based on the information in $\F^\e_t$. The manager can choose between having the production facility 'open' (state $1$) or 'closed' (state $0$), i.e., $\Q=\{0,1\}$, and the corresponding payoff functions are 
\begin{eqnarray} \label{eq:payoffs}
\psi_0(x)=0, \hspace{1cm}
\psi_1(x)=x. 
\end{eqnarray}
The manager can only observe the process $\xi^\e$ and the decision to open/close the production at time $t$ must hence be made based solely on the information contained in $\F^\e_t$. Concerning the cost of switching we assume that
\begin{equation}\label{eq:triangleswitching}
\mbox{$c_{01}$ and $c_{10}$ are positive fixed constants.}% such that $c_{01} +c_{10} >0$}.
\end{equation}
%Note that the ``cost" of switching from one state to the other may be negative. This may be the case when, e.g., the production facility can sell inventory when closing, i.e., when switching from state $1$ to state $0$.
% Since the strategy in the IIOSP outlined here is not adapted to the filtration generated by the underlying Brownian motion $W$, the standard results on OSP are not directly applicable. 

The solution to the IIOSP outlined above is the value function $v_i^\e(t,\hat \pi)$, $i \in \Q$, defined as 
\begin{equation} \label{eq:PIproblem}
v_i^\e(t, \hat \pi)= \sup _{\mu \in A^{\xi^\e}_{t,i}}  \E \l [ \int _t ^T \E \l[W^{x}_s \I_{\{\mu_s =1\}} \,\vline \, \F^\e_s \r]  ds - \sum _{n \geq 1} c_{\mu_{\tau_{n-1}} \mu_{\tau_n}}  \,\vline \,  W^{x}_t \sim \hat \pi \r ],
\end{equation}
where $\hat \pi$ is a probability measure and $\A^{\xi^\e}_{t,i}$ denotes the set of strategies adapted to the filtratrion  $\F^\e_t$ which are in state $i$ at time $t$. We denote by $m^{x,\e}_t= \E \l [ W^x_t \, \vline \, \F^\e_t  \r]$ and $\theta^{x,\e}_t = \E \l [ (m^{\e}_t - W^x_t)^2  \r ]$ the conditional mean and conditional variance of $W^x_t$. Using this notation and the fact that $\mu$ is by definition an $\F^\e$-measurable strategy \eqref{eq:PIproblem} can be simplified to read
\begin{equation*} %\label{eq:PIproblem2}
v_i^\e(t, \hat \pi)= \sup _{\mu \in A^{\xi^\e}_{t,i}}  \E \l [ \int _t ^T m^{x,\e}_s  \I_{\{\mu_s =1\}}ds - \sum _{n \geq 1} c_{\mu_{\tau_{n-1}} \mu_{\tau_n}}  \,\vline \, W^x_t \sim \hat \pi \r ]
\end{equation*}
As with \eqref{eq:valuefcnpartial}, the function $v^\e_i(t, \hat \pi)$ can be interpreted as the optimal expected profit if, at time $t$, the facility is in state $i$ and, given the available observations, the probability distribution of $W^x_t$ is $\hat \pi$. For future reference we also introduce the value function $v_i^0(t, \hat x)$ defined as
\begin{equation*} %\label{eq:FIproblem2}
v_i^0(t, \hat x)= \sup _{\mu \in A^{W^x}_{t,i}}  \E \l [ \int _t ^T W^x_s  \I_{\{\mu_s =1\}}ds - \sum _{n \geq 1} c_{\mu_{\tau_{n-1}} \mu_{\tau_n}}  \,\vline \, W^x_t=\hat x \r ],
\end{equation*}
i.e., the value function of the standard OSP corresponding to the assumptions above.
%$v_i^0$ represents the value function for the OSP \eqref{eq:FIproblem}, or, correspondingly, the value \eqref{eq:PIproblem} when the manager can observe the underlying Browninan motion $W^x$ explicitly, i.e., when the noise $\e$ in \eqref{eq:observable} is $0$. 
%We stress that defintion \eqref{eq:PIproblem2} is consistent with that of \cite{LiNO14a} and \cite{LiNO14b}. %and repeat that the related problem of incomplete information optimal stopping is studied under similiar assumptions in \cite{F78}.

\section{Main results}\label{sec:mainresults}
To simplify the statements of the main results, we first state an immidiate consequence of Theorem \ref{thm:filtering}, further explained in the bulk of the paper. In particular, Proposition \ref{prop:dimreduction} reduces the apriori infinite dimensionality of the IIOSP.

\begin{proposition}\label{prop:dimreduction}
The stochastic probability measure $\pi_t$ of $W^x_t$ conditional on $\F^{\e}_t$ is fully characterized by its conditional mean $m_t^{x,\e}=\E\l [W^x_t \, \vline \, \F^\e_t \r]$ and a time-dependent deterministic function $\theta_t$.
\end{proposition}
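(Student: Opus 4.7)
The plan is to invoke linear (Kalman--Bucy) filtering theory applied to the signal-observation pair $(W^x_t, \xi^\e_t)$. Observe that the signal $W^x$ is a (Gaussian) Brownian motion and that the observation equation
\begin{equation*}
d\xi^\e_t = W^x_t\,dt + \e\,d\bar W_t
\end{equation*}
is linear in the signal with an additive independent Brownian noise. Hence $(W^x, \xi^\e)$ forms a linear Gaussian system, which is precisely the setting covered by Theorem \ref{thm:filtering}.

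First, I would recall (as a consequence of Theorem \ref{thm:filtering}) that for such linear Gaussian systems the conditional distribution $\pi_t$ of $W^x_t$ given $\F^\e_t$ is itself Gaussian for every $t\ge 0$. Since a univariate Gaussian law is completely determined by its mean and variance, it therefore suffices to identify $\pi_t$ with the pair
\begin{equation*}
\bigl(m^{x,\e}_t,\ \mathrm{Var}(W^x_t\mid \F^\e_t)\bigr).
\end{equation*}
The first component is, by definition, the conditional mean $m^{x,\e}_t=\E[W^x_t\mid\F^\e_t]$.

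Second, I would show that the conditional variance is in fact a deterministic function of $t$ alone. This is the classical feature of the Kalman--Bucy filter: in a linear Gaussian setting the conditional covariance solves a (deterministic) Riccati ODE with deterministic initial data, so it does not depend on the realization $\omega$. In particular, the conditional variance coincides with the unconditional mean-square error $\theta_t:=\E[(m^{x,\e}_t-W^x_t)^2]$ introduced in the excerpt, and one recovers $\theta_t$ either by solving the Riccati equation obtained from Theorem \ref{thm:filtering} or, equivalently, by a direct moment computation exploiting the joint Gaussianity of $(W^x_t,\xi^\e_s,\, s\le t)$.

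Combining these two steps yields the claim: $\pi_t$ is Gaussian with parameters $(m^{x,\e}_t,\theta_t)$, where $m^{x,\e}_t$ is the (stochastic) conditional mean and $\theta_t$ is deterministic. The only real obstacle I anticipate is making sure that Theorem \ref{thm:filtering} genuinely gives Gaussianity of $\pi_t$ in the present setup (rather than just a formula for $m^{x,\e}_t$) and that the Riccati equation for the conditional variance can be extracted from it; once that is in hand, the proposition is essentially just the statement that a Gaussian is determined by two parameters.
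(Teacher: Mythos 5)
Your proposal is correct and follows essentially the same route as the paper: Theorem \ref{thm:filtering} directly asserts Gaussianity of the conditional law and gives the deterministic Riccati ODE \eqref{eq:variance} for the conditional variance, so the distribution is pinned down by the pair $(m^{x,\e}_t,\theta_t)$. The only caveat you raise is already resolved by the statement of Theorem \ref{thm:filtering} itself, which explicitly includes the Gaussianity claim.
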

With this proposition in mind, it is clear that the value function $v^\e(t,\hat \pi)$ can be expressed as a function $v^\e(t,m): [0,T]\times \R \to \R$. With slight abuse of notation we will write
\begin{equation} \label{eq:PIproblem3}
v^\e_i(t, \hat \pi) = v_i^\e(t, m)= \sup _{\mu \in A^{\xi^\e}_{t,i}}  \E \l [ \int _t ^T m^{x,\e}_s  \I_{\{\mu_s =1\}}ds - \sum _{n \geq 1} c_{\mu_{\tau_{n-1}} \mu_{\tau_n}}  \,\vline \,  m^{x,\e}_t=m  \r ]
\end{equation}
for $m = \int_{\R} x d \hat \pi(dx) = \E \l[ W^x_t \, \vline \, \F^\e_t \r]$. We are now ready to state the results. 
\begin{theorem}[Monotonicity in $\e$] \label{prop:monotone}
%Let $v^0_i$ be  by \eqref{eq:FIproblem2} and $v^{\e}_i$ be the value function defined by \eqref{eq:PIproblem3}. Then, f
For any $\e_1, \e_2$ such that $0 \leq \e_1 \leq \e_2$ we have
\begin{equation*}
 v_i^{\e_2}(t,m) \leq v_i^{\e_1}(t,m).% \leq v_i^0(t,m).
\end{equation*}
\end{theorem}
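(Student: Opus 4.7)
The intuition is that smaller $\e$ affords the manager strictly more information, both via less noisy observations and via a sharper posterior, so the $\e_1$-manager can always do at least as well as the $\e_2$-manager. My plan is a coupling argument that is cleanest at $t=0$ and extended to general $(t,m)$ via the dimensionally reduced problem of Proposition \ref{prop:dimreduction}.

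\emph{Step 1 ($t=0$).} Both problems share the deterministic initial condition $W^x_0=x$. On an enlarged probability space, introduce a Brownian motion $\tilde W$ independent of $(W^x,\bar W)$ and set
\[
\hat\xi_s:=\xi^{\e_1}_s+\sqrt{\e_2^2-\e_1^2}\,\tilde W_s.
\]
A direct computation shows that $\hat\xi$ has the correct joint law with $W^x$ for an $\e_2$-observation, so the $\e_2$-problem can be represented on this space with $\F^{\hat\xi}_s\subseteq\F^{\e_1}_s\vee\sigma(\tilde W_u:u\le s)$. For any $\mu\in\A^{\xi^{\e_2}}_{0,i}$, conditioning on a realisation of $\tilde W$ turns $\mu$ into an $\F^{\e_1}$-adapted strategy whose expected reward, using $\tilde W\perp(W^x,\bar W)$, is at most $v^{\e_1}(0,x)$. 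Integrating over $\tilde W$ gives $v^{\e_2}(0,x)\le v^{\e_1}(0,x)$.

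\emph{Step 2 (general $(t,m)$).} Here the Gaussian priors have the same mean $m$ but different variances $\theta^{\e_1}_t<\theta^{\e_2}_t$, so the pure observation-noise coupling is not enough. I would invoke Proposition \ref{prop:dimreduction} to recast each $v^\e(t,m)$ as the value of a one-dimensional Markov optimal switching problem for the filter mean, driven by a Brownian motion with deterministic diffusion coefficient $\sigma^\e_t$ (coming from the Kalman-Bucy equations) satisfying $\sigma^{\e_1}_t\ge\sigma^{\e_2}_t$. Once convexity of $v^{\e_1}(t,\cdot)$ in $m$ is available, a short computation shows that $v^{\e_1}$ is a viscosity supersolution of the HJB variational inequality satisfied by $v^{\e_2}$, with the same zero terminal condition at $T$, and the comparison principle for that system yields $v^{\e_1}\ge v^{\e_2}$ on $[0,T]\times\mathbb R$.

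The hard part is Step 2: the clean coupling at $t=0$ does not extend verbatim because simulating the larger prior variance $\theta^{\e_2}_t$ would introduce a residual correction term -- proportional to an independent Gaussian representing the extra prior uncertainty -- whose sign cannot be controlled directly, since the optimal $\e_2$-strategy is positively correlated with that Gaussian. The HJB detour sidesteps this but requires both convexity of $v^\e$ in $m$, to be proved separately from the linearity of the reward in $m$ and the affine dependence of the filter dynamics on the initial mean, and a comparison principle for the variational-inequality system with time-dependent coefficients.
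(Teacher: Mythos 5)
Your Step 2 is, up to a reversal of roles, the paper's argument: reduce via Proposition \ref{prop:dimreduction} and Proposition \ref{prop:altrepres} to a full--information problem for the filter mean with diffusion coefficient $\tanh(t/\e)$, prove convexity of the value function in $m$, compare the generators $\L^{\e_1}$ and $\L^{\e_2}$, and invoke the comparison principle for the variational--inequality system. The Step 1 coupling (adding independent noise $\sqrt{\e_2^2-\e_1^2}\,\tilde W$ to $\xi^{\e_1}$ and integrating out the randomization) is a genuinely different, information--theoretic route, but as you yourself note it only covers $t=0$, so the burden of the proof falls on Step 2.

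There is, however, a real gap in Step 2 as written: you verify the wrong half of the viscosity inequality. You propose to show that $v^{\e_1}$ is a \emph{supersolution} of the $\e_2$-system. That requires, for every test function $\varphi$ touching $v^{\e_1}$ from \emph{below} at $(\hat t,\hat m)$, that
$-\partial_t\varphi-\L^{\e_2}\varphi-\psi\geq\bigl(-\partial_t\varphi-\L^{\e_1}\varphi-\psi\bigr)+\tfrac12\bigl(\tanh^2(\hat t/\e_1)-\tanh^2(\hat t/\e_2)\bigr)\partial_{mm}\varphi\geq 0$,
and since the coefficient $\tanh^2(\hat t/\e_1)-\tanh^2(\hat t/\e_2)$ is nonnegative you need $\partial_{mm}\varphi(\hat t,\hat m)\geq 0$. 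Convexity of $v^{\e_1}$ does not give this: convexity in the viscosity sense controls the second--order part of test functions touching from \emph{above} (superjets), while subjets of a convex function can have arbitrarily negative second--order part (e.g.\ $\varphi(m)=\hat m^2+2\hat m(m-\hat m)-(m-\hat m)^2$ touches $m\mapsto m^2$ from below with $\varphi''=-2$). So the ``short computation'' works only if $v^{\e_1}$ were $C^2$, which is not available. The fix is to flip the roles, as the paper does: show that $v^{\e_2}$ is a \emph{subsolution} of the $\e_1$-system, testing with $\varphi$ touching $v^{\e_2}$ from above on the continuation region, where convexity of $v^{\e_2}$ legitimately yields $\partial_{mm}\varphi\geq 0$ and the sign of $\tanh^2(\hat t/\e_2)-\tanh^2(\hat t/\e_1)\leq 0$ closes the estimate; comparison against $v^{\e_1}$, a supersolution of its own system with the same terminal data, then gives $v^{\e_2}\leq v^{\e_1}$.
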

In words, Theorem \ref{prop:monotone} states that the value of information is positive, i.e., decreasing the noise in the observation process $\xi$ increases the optimal expected profit. The intuitive reason for this result is that more accurate observations simplify the managers task of optimally controlling the production facility. When the noise in $\xi$ tends to $0$, more and more information becomes available and the task of the manager starts to resemble that under full information. Consequently, the expected optimal profit under incomplete information should tend to the expected optimal profit under full information. This intuition is confirmed by the follwoing theorem.
\begin{theorem}[Convergence of $v^\e$ to $v^0$] \label{thm:convergence}
As $\e \searrow 0$ the value function $v^\e(t,m) \to v^0(t,m)$ and %and there exists a constant $C$, independent of $\e$, such that
\begin{equation*}
0 \leq v^0(t, m) - v^\e(t,m) \leq  (T-t) \l( \frac{4}{\pi} \log (2) -\frac{2}{\pi} \r) ^{1/2} \sqrt{\e}
\end{equation*}
\end{theorem}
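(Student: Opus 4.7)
The plan is to prove the two inequalities separately. The lower bound $0 \leq v^0(t,m) - v^\e(t,m)$ is immediate from Theorem \ref{prop:monotone} applied with $\e_1=0$ and $\e_2=\e$, since at $\e=0$ the observation $\xi^\e$ determines $W^x$ exactly, so $m^{x,0}_t = W^x_t$ a.s.\ and $v^\e|_{\e=0}$ coincides with $v^0$.

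For the upper bound I would transfer an optimal strategy from the full-information problem to the partial-information problem by substituting the filter for the unobservable process. Theorems \ref{thm:varineq} and \ref{thm:finstrategy} produce an optimal $\mu^0 \in \A^{W^x}_{t,i}$ with finitely many switches, whose switching times are hitting times $\tau_k = \inf\{s > \tau_{k-1}:(s,W^x_s)\in S_{i_{k-1}}\}$ of the switching regions of the $v^0$-problem. I would then define $\tilde\mu \in \A^{\xi^\e}_{t,i}$ by applying the same feedback rule to the filter, i.e.\ $\tilde\tau_k = \inf\{s > \tilde\tau_{k-1}:(s,m^{x,\e}_s)\in S_{i_{k-1}}\}$ through the same sequence of modes. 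Admissibility then gives $v^\e(t,m) \geq J(\tilde\mu)$, so that $v^0(t,m) - v^\e(t,m) \leq J(\mu^0) - J(\tilde\mu)$.

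The loss $J(\mu^0) - J(\tilde\mu)$ splits into a payoff gap and a switching-cost gap. The payoff gap equals $\E\l[\int_t^T W^x_s(\mathbf{1}_{\mu^0_s=1} - \mathbf{1}_{\tilde\mu_s=1})\,ds\r]$, which I would control by the filtering error: by Proposition \ref{prop:dimreduction} the conditional variance $\theta^\e_s = \E[(W^x_s - m^{x,\e}_s)^2]$ is deterministic, and solving the Kalman--Bucy Riccati equation $\dot\theta^\e_s = 1 - (\theta^\e_s/\e)^2$, $\theta^\e_0 = 0$, yields the explicit expression $\theta^\e_s = \e\tanh(s/\e) \leq \e$. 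Since the filter error is Gaussian with mean zero, $\E|W^x_s - m^{x,\e}_s| = \sqrt{2\theta^\e_s/\pi}$, and confining the integrand to a boundary layer of the switching regions shows the payoff gap contributes $O(\sqrt{\e})$ per unit time.

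The main technical obstacle will be the switching-cost gap, since the trajectories $(s,m^{x,\e}_s)$ and $(s,W^x_s)$ need not cross the switching boundaries the same number of times, and each extra crossing carries a fixed cost $c_{01}$ or $c_{10}$. I expect the precise constant $\sqrt{(4\log 2 - 2)/\pi}$ to emerge from combining the normal-density estimate with integrals of the form $\int_0^{T/\e}(1-\tanh u)\,du = (T/\e) - \log\cosh(T/\e)$, which contributes a $\log 2$ in the limit $T/\e \to \infty$; the crude bound $\theta^\e_s \leq \e$ loses this refinement. A cleaner alternative would be a direct viscosity comparison of the variational inequalities satisfied by $v^0$ and $v^\e$ (Theorem \ref{thm:varineq} applied to the OSP for the filter process $m^{x,\e}$, whose generator is $\tfrac12\tanh^2(t/\e)\partial_{mm}$): the two systems differ only through the operator $\tfrac12\mathrm{sech}^2(t/\e)\partial_{mm}$, and a quantitative comparison principle should yield the claimed $\sqrt{\e}$-rate.
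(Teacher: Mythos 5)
Your lower bound is fine and agrees with the paper: it is the monotonicity of Theorem \ref{prop:monotone}, proved there using $\tanh^2 \leq 1$. The upper bound, however, contains a genuine gap, and it is exactly the one you flag yourself: the switching-cost mismatch. Transplanting the hitting-time feedback rule from $(s,W^x_s)$ to $(s,m^{x,\e}_s)$ produces two strategies $\mu^0$ and $\tilde\mu$ with, in general, different numbers of switches, and each extra or missing switch costs a fixed constant $c_{01}$ or $c_{10}$. To recover an $O(\sqrt{\e})$ bound you would need to show that the expected discrepancy in the number of boundary crossings of the two coupled paths is itself $O(\sqrt{\e})$; nothing in your sketch does this, and it is not routine (once the paths desynchronize at one crossing they can stay desynchronized for a macroscopic time, so the pointwise bound $\theta^\e_s\leq\e$ does not localize the set where $\mu^0_s\neq\tilde\mu_s$). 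The fallback via a quantitative comparison principle is likewise not carried out.

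The paper sidesteps the issue entirely. By Proposition \ref{prop:altrepres}, $v^\e_i(t,m)$ is the value of a \emph{full-information} OSP for $X^{m,t,\e}_s = m + \int_t^s \tanh(r/\e)\, dW_r$, driven (after identifying the innovations Brownian motion with $W$ in law) by the \emph{same} Brownian motion as the process $W^{m,t}$ in the $v^0$-problem. Hence the finite optimal strategy $\mu^\ast$ of Theorem \ref{thm:finstrategy} is admissible for both problems simultaneously, the two switching-cost sums are the same random variable and cancel, and
\begin{equation*}
v^0_i(t,m)-v^\e_i(t,m) \leq \E\l[\int_t^T \l(W^{m,t}_s - X^{m,t,\e}_s\r)\I_{\{\mu^\ast_s=1\}}\,ds\r] \leq (T-t)\,\E\l[\sup_{s\geq t}\int_t^s\l(1-\tanh(r/\e)\r)dW_r\r].
\end{equation*}
The reflection principle and monotone convergence give $\E\bigl[\sup_{s\geq t}Q^\e_{t,s}\bigr] = \lim_{r\to\infty}\bigl(\tfrac{2}{\pi}\int_t^r(1-\tanh(u/\e))^2\,du\bigr)^{1/2}\leq \sqrt{\e}\,\bigl(\tfrac{4}{\pi}\log 2-\tfrac{2}{\pi}\bigr)^{1/2}$; note the constant comes from $\int_0^\infty(1-\tanh v)^2\,dv = 2\log 2-1$, i.e.\ the \emph{squared} integrand, not the first-power integral you conjectured. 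The decisive idea you are missing is to run one and the same strategy in both reduced problems so that the switching costs cancel identically, rather than coupling two distinct feedback strategies.
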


%
%\begin{theorem}
%The value function $v^0$ is finite. 
%\end{theorem}
%\begin{proof}
%Use the supremum estimates for Brownian motion
%\end{proof}

%The final result of this paper concerns the relation between lack of information and the optimal strategy . Before stating it, we introduce the regions $C_i^\e$ and $S_i^\e$, $i \in \Q$, %, S_0^{\e}$ and $S_1^\e$, $\e> 0$, 
%the incomplete information analogues of \eqref{eq:swregions}, 
%\begin{align}\label{eq:swregionsPI}
%C^\e_i &= \l \{ (t,m) \in [0,T] \times \R : v^\e_i(t,m) > v^\e_i(t,m) - c_{i, 1-i} \r\}, \notag \\
%S^\e_i &= \l \{ (t,m) \in [0,T] \times \R : v^\e_i(t,m) = v^\e_i(t,m) - c_{i,1-i} \r\} . %, \notag \\
%%C^\e_1 &= \l \{ (t,m) \in [0,T] \times \R : v^\e_1(t,m) > v^\e_0(t,m) - c_{10} \r\}, \notag \\
%%S^e_1 &= \l \{ (t,m) \in [0,T] \times \R : v^\e_1(t,m)= v^\e_0(t,m)- c_{10} \r\}.
%\end{align}
%
%
%\begin{theorem}\label{thm:waitingtime}
%Let $C^{\e_k}_i$, $i\in \Q$, $k\in \{1,2\}$, $\e >0$, be defined as in \eqref{eq:swregionsPI}. Then, for any $\e_1 \leq \e_2$
%$$
%C^{\e_1}_i \subset C^{\e_2}_i.
%$$
%\end{theorem}
%
%In words, Theorem \ref{thm:waitingtime} states that the lack of information delays the controllers decision to switch state. Intuitively, this corresponds to the demand of learning about the unobservable underlying process $W^x$. There are empirical studies showing that this phenomena occurs in applications such as energy plant management, see \cite{FHU11}.

\section{Proof of the main results} \label{sec:proofs}
The outline of this section is as follows. Firstly, we state some results from the theory of stochastic filtering which will constitute a base for the proofs of the results in Section \ref{sec:mainresults}. We then show that the IIOSP stated above can be reduced to a full information OSP and apply the standard theory, in particular the results presented in Section \ref{sec:assandnot}, to prove
%in turn, Proposition \ref{prop:monotone}, Theorem \ref{thm:convergence} and Theorem \ref{thm:waitingtime}.
Theorem \ref{prop:monotone} and Theorem \ref{thm:convergence}.
% that the value function in the IIOSP is monotone w.r.t. the disturbance parameter $\e$ and that the IIOSP value function converges to the standard OSP as $\e \searrow 0$.

\subsection{Stochastic filtering}
Results concerning standard OSP are in general not directly applicable to the IIOSP due to the lack of information. 
The main idea of this paper is to reduce the IIOSP to a standard OSP to which the results of Section \ref{sec:assandnot} can be applied. This reduction is made possible by the following well-known results from the theory of linear stochastic filtering. Proofs of the below statements can be found in \cite{LS78} (Theorem 10.3 and Theorems 7.12 and 7.16).
%However, in some specific settings, including the one studied here, it is possible to reduce the IIOSP to a corresponding full information OSP to which standard results and techniques can be applied. To achieve this reduction we will make use of the following 
%The following well known results from the theory of linear stochastic filtering will help us reduce the IIOSP to standrad OSP to which Theorem \ref{thm:varineq} and \ref{thm:finstrategy} can be applied. Proofs of the below theorems can be found in, e.g., \cite{LS78} (Theorem 10.3 and 7.12/7.16).
\begin{theorem} \label{thm:filtering}
The distribution of $W^x_t$  conditional on $\F^\e_t$ is Gaussian. Furthermore, the conditional mean $m^{x,\e}_t= \E \l [ W^x_t \, \vline \, \F^\e_t  \r]$ and variance $\theta^\e_t = \E \l [ (m^{x,\e}_t - W_t)^2  \r]$ are the unique pair of processes satisfying
\begin{align} \label{eq:mean}
&dm^{x,\e}_t = \frac {\theta^\e_t  }{\e^2}\l( d\xi^\e_t - m^{x,\e}_t dt \r), \qquad m^\e_0=x, \\
\label{eq:variance}
&\frac {d \theta}{dt} = 1 - \l ( \frac{\theta^\e_t}{\e}\r)^2, \qquad   \theta^\e_0=0.
\end{align}
\end{theorem}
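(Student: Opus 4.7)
The plan is to follow the classical Kalman--Bucy derivation for linear Gaussian filtering, exploiting the fact that the signal $W^x$ is Gaussian, the observation map is linear in the signal, and the noise $\e \bar W$ is independent Gaussian. The proof splits into three steps: (i) establishing that the conditional law of $W^x_t$ given $\F^\e_t$ is Gaussian and has deterministic conditional variance, (ii) deriving the SDE \eqref{eq:mean} for the conditional mean, and (iii) deriving the Riccati ODE \eqref{eq:variance} for the conditional variance.

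\emph{Step (i): Gaussianity.} Because $\xi^\e_s - x = \int_0^s W^x_u \, du + \e \bar W_s$ is a linear functional of the two-dimensional Brownian motion $(W, \bar W)$, the collection $\{W^x_t\} \cup \{\xi^\e_s : 0 \leq s \leq t\}$ is jointly Gaussian. Since $\F^\e_t$ is generated by the latter family, the conditional law of $W^x_t$ given $\F^\e_t$ is Gaussian, and hence fully described by its mean $m^{x,\e}_t$ and variance. A standard property of jointly Gaussian families is that the conditional variance does not depend on the conditioning values, so $\theta^\e_t$ is in fact deterministic; this simultaneously proves Proposition \ref{prop:dimreduction}.

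\emph{Step (ii): SDE for the conditional mean.} Introduce the innovations process $\nu_t := \xi^\e_t - x - \int_0^t m^{x,\e}_s \, ds$, for which a short computation gives $d\nu_t = (W^x_t - m^{x,\e}_t)\, dt + \e\, d\bar W_t$. By Levy's criterion, once one has computed the $\F^\e$-quadratic variation, the rescaling $B_t := \nu_t/\e$ is an $\F^\e$-Brownian motion. Using the tower property and the independence of future increments of $W^x$ from $\F^\e_s$, one checks that $m^{x,\e}_t$ is a square-integrable $\F^\e$-martingale, so by the martingale representation theorem relative to $B$ there exists a predictable integrand $K_t$ with $dm^{x,\e}_t = K_t\, dB_t$. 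To identify $K_t$ I would use the $L^2$-orthogonality $\E[(W^x_t - m^{x,\e}_t) Z] = 0$ for $\F^\e_t$-measurable $Z$ together with It\^o's isometry to obtain $K_t = \theta^\e_t/\e$; rewriting this in terms of $\xi^\e$ produces exactly \eqref{eq:mean}.

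\emph{Step (iii): ODE for the conditional variance.} Setting $\Delta_t := W^x_t - m^{x,\e}_t$, combining $dW^x_t = dW_t$ with the SDE obtained in Step (ii) yields
\begin{equation*}
d\Delta_t = dW_t - (\theta^\e_t/\e^2)\Delta_t\, dt - (\theta^\e_t/\e)\, d\bar W_t,
\end{equation*}
so that $d\langle \Delta \rangle_t = (1 + (\theta^\e_t/\e)^2)\, dt$. Applying It\^o's formula to $\Delta_t^2$, taking expectations to kill the martingale parts, and using that $\E[\Delta_t^2] = \theta^\e_t$ gives
\begin{equation*}
\frac{d\theta^\e_t}{dt} = -2(\theta^\e_t/\e^2)\theta^\e_t + 1 + (\theta^\e_t/\e)^2 = 1 - (\theta^\e_t/\e)^2,
\end{equation*}
with $\theta^\e_0 = 0$ since $m^{x,\e}_0 = x = W^x_0$. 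Uniqueness of the pair $(m^{x,\e}, \theta^\e)$ then follows from standard ODE theory for the Riccati equation (whose explicit solution is $\e \tanh(t/\e)$) combined with the fact that, once $\theta^\e$ is known, \eqref{eq:mean} becomes an affine SDE in $m^{x,\e}$ with a unique strong solution.

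The main technical obstacle is the identification of the Kalman gain $K_t$: one must manipulate stochastic integrals with respect to the innovations process while keeping careful track of the filtration, since $W^x$ is \emph{not} adapted to $\F^\e$. Handling It\^o products that mix $W^x$ with $\F^\e$-adapted integrands therefore requires conditioning arguments rather than naive expansions. Everything else reduces to Gaussian $L^2$-projection and routine SDE bookkeeping.
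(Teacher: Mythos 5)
The paper itself offers no proof of this theorem: it is imported wholesale from Liptser and Shiryayev (Theorems 10.3, 7.12 and 7.16 of \cite{LS78}), so the only meaningful comparison is against that source, and your sketch is precisely the standard Kalman--Bucy derivation those theorems encapsulate. Steps (i) and (iii) are sound as written: joint Gaussianity of $\{W^x_t\}\cup\{\xi^\e_s:s\le t\}$ gives the conditionally Gaussian law with deterministic conditional variance, and the It\^o computation for $\Delta_t^2$ correctly produces $\dot\theta = 1-(\theta/\e)^2$ once the filter SDE is in hand. The one place where you should not wave at ``the martingale representation theorem'' is the representation $dm^{x,\e}_t=K_t\,dB_t$: a priori $B$ is an $\F^\e$-Brownian motion, but it is not automatic that every square-integrable $\F^\e$-martingale is a stochastic integral against $B$ --- for that you need either the innovations representation theorem of Fujisaki--Kallianpur--Kunita or the linear-Gaussian fact that $\F^B_t=\F^\e_t$, which is exactly the content of the paper's Theorem \ref{thm:filtering2} and is itself a nontrivial result of \cite{LS78}. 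With that reference supplied, and with the gain identification $K_t=\theta^\e_t/\e$ carried out via the $L^2$-orthogonality argument you indicate (taking care that the cross term $\E[W^x_t\int\phi\,d\bar W]$ vanishes even though $\phi$ is only $\F^\e$-adapted), your outline closes into a complete and correct proof.
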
 
\begin{theorem} \label{thm:filtering2}
The random process $N_t = (N_t, \F^{\e}_t)$, $0 \leq t \leq T$, with
$$
N_t=\xi^\e_t - \int _0^t m_s^\e ds,
$$
is a Wiener process and the filtration $\F^N$ generated by $N$ conincides with that generated by $\xi^\e$, i.e., $\F^{\e}_t = \F_t^N$  for all $t \in [0,T]$.
\end{theorem}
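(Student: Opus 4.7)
The plan is to apply L\'evy's characterization of Brownian motion: show that $N$ is a continuous $\F^\e$-martingale with a deterministic quadratic variation. The filtration equivalence is then a separate argument based on the explicit form of \eqref{eq:mean}.

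For the martingale property, writing
\[
N_t - N_s = \int_s^t (W_u^x - m_u^\e)\,du + \e(\bar W_t - \bar W_s),
\]
I would test against $\mathbf 1_A$ for arbitrary $A \in \F^\e_s$. Since $A \in \F^\e_s \subseteq \F^\e_u$ for $u \geq s$ and $m_u^\e = \E[W_u^x \mid \F^\e_u]$ by definition, an application of Fubini gives
\[
\E\!\left[\mathbf 1_A \int_s^t W_u^x\, du\right] = \int_s^t \E\bigl[\mathbf 1_A\, \E[W_u^x \mid \F^\e_u]\bigr] du = \E\!\left[\mathbf 1_A \int_s^t m_u^\e\, du\right],
\]
so the drift part of $N_t-N_s$ cancels. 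The Brownian contribution vanishes because $\bar W_t - \bar W_s$ is independent of the larger $\sigma$-algebra $\sigma(W_u: u \geq 0) \vee \sigma(\bar W_u : u \leq s) \supseteq \F^\e_s$, using independence of $W$ and $\bar W$. Next, for the quadratic variation, the drift has bounded variation paths and contributes nothing, while $\e \bar W$ contributes $\e^2 t$, so $\langle N \rangle_t = \e^2 t$ and L\'evy's theorem applied to $N/\e$ identifies $N$ as a Wiener process in the sense of the statement.

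For the filtration equivalence, $\F^N_t \subseteq \F^\e_t$ is immediate from the definition of $N$, since both $\xi^\e_t$ and $\int_0^t m_s^\e\, ds$ are $\F^\e$-adapted. For the reverse inclusion, \eqref{eq:mean} rearranges to $dm_t^\e = (\theta_t^\e/\e^2)\, dN_t$ with $m_0^\e = x$; by \eqref{eq:variance}, $\theta^\e$ is a deterministic function of time, so this expresses $m^\e$ as an explicit Wiener integral against $N$, hence $\F^N$-adapted. Consequently $\xi^\e_t = N_t + \int_0^t m_s^\e\, ds$ is $\F^N$-measurable, yielding $\F^\e_t \subseteq \F^N_t$.

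The main subtlety I anticipate is the martingale step: na\"ively one wants $\bar W_t - \bar W_s \perp \F^\e_s$, but $\F^\e_s$ is generated by paths of $\xi^\e$ that themselves depend on $\bar W$ up to time $s$. The resolution is the enlargement of conditioning to the joint filtration above, together with Fubini justified by the $L^2$-integrability inherent in the Gaussian structure established in Theorem \ref{thm:filtering}.
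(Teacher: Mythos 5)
Your proof is correct. Note first that the paper does not actually prove this statement: it is quoted from Liptser--Shiryaev \cite{LS78} (Theorems 7.12 and 7.16), so there is no internal proof to compare against. Your argument is essentially the standard one from that reference specialized to the present linear Gaussian setting: L\'evy's characterization for the innovation process (with the drift cancelled by the tower property $\E[\mathbf 1_A W^x_u]=\E[\mathbf 1_A m^\e_u]$ for $A\in\F^\e_s\subseteq\F^\e_u$, and the $\e\bar W$ increment handled by conditioning on the enlarged $\sigma$-algebra $\sigma(W)\vee\sigma(\bar W_u:u\le s)\supseteq\F^\e_s$), followed by the observation that in the Kalman--Bucy case $m^\e$ solves a linear SDE driven by $N$ with \emph{deterministic} coefficient $\theta^\e_t/\e^2$, which is what makes the otherwise delicate ``innovation problem'' $\F^\e_t\subseteq\F^N_t$ tractable. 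Two minor points worth making explicit: (i) as you note, $\langle N\rangle_t=\e^2 t$, so $N$ is a Wiener process only after normalization by $\e$ --- the paper implicitly acknowledges this later by working with $R=N/\e$ in the proof of Proposition \ref{prop:altrepres}; (ii) for the inclusion $\F^N_t\subseteq\F^\e_t$ one should take the continuous (progressively measurable) version of $m^\e$ furnished by \eqref{eq:mean}, and the reverse inclusion holds modulo null sets, i.e.\ for the augmented filtrations, since the Wiener integral $\int_0^t(\theta^\e_u/\e^2)\,dN_u$ is an $L^2$-limit of sums of increments of $N$. Neither point affects the substance of the argument.
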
 

Note that the starting point $m_0=\E [W^x_0] $ is given by the starting point of the underlying process $W^x$. Since the distribution of $W^x_t$  conditional on $\F^\e_t$ is Gaussian it is fully characterized by its mean $m_t$ and variance $\theta_t$. Proposition~\ref{prop:dimreduction} follows immediately since $\theta_t$ is given by the ordinary differential equation \eqref{eq:variance}.

\subsection{Reduction to the complete information case}
Before proving the main results, we reduce the problem to a complete information setting and prove the convexity of the value function.

\begin{proposition} \label{prop:altrepres}
Fix $\e >0$ and assume \eqref{eq:payoffs} and \eqref{eq:triangleswitching}. Let $X^{m,t,\e}_s$ be the solution to \begin{equation}\label{eq:Xdynamics}
dX^{m,t,\e}_s = \tanh(\dfrac{s}{\e} )dW_s, \hspace{0.5cm} X^{m,t,\e}_t = m,
\end{equation}
and let
\begin{equation*}
v_i(t,m) = \sup_{\mu \in \A^{X}_i }  \E \l [\int_t ^T X^{m,t,\e}_{s} \I_{\{\mu_s=1\}}ds - \sum _{n \geq 1} c_{\mu_{\tau_{n-1}} \mu_{\tau_n}} \r ]
\end{equation*}
be the value function of the full information OSP \eqref{eq:FIproblem}. Then, $v_i^\e(t, m) = v_i(t,m)$.
\end{proposition}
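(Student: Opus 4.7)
The plan is to use the filtering results just cited to reduce the IIOSP to a full information OSP whose state process is the conditional mean $m^{x,\e}$. The payoff integrand in the simplified form \eqref{eq:PIproblem3} already depends only on $m^{x,\e}_s$ (because $\psi_1(x)=x$), and the switching costs are constants by \eqref{eq:triangleswitching}, so the entire value functional is determined by the path of $m^{x,\e}$ on $[t,T]$ together with the strategy $\mu$. Hence it suffices to identify the law of $m^{x,\e}$ and to verify that the admissible strategy classes on the two sides coincide.

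For the dynamics, I first solve the Riccati ODE \eqref{eq:variance}. Setting $u=\theta^\e/\e$ reduces it to $du/dt = (1-u^2)/\e$ with $u(0)=0$, whose unique solution is $u(s) = \tanh(s/\e)$, so $\theta^\e_s = \e \tanh(s/\e)$. Plugging this into \eqref{eq:mean} and using $d\xi^\e_s - m^{x,\e}_s ds = dN_s$ from Theorem \ref{thm:filtering2}, one obtains $dm^{x,\e}_s = \tanh(s/\e)\, d\tilde N_s$, where $\tilde N := N/\e$ is a standard $(\P,\F^\e)$-Brownian motion. This is precisely the SDE \eqref{eq:Xdynamics}, with driving Brownian motion $\tilde N$ and initial condition $m^{x,\e}_t = m$; in particular, $m^{x,\e}$ and $X^{m,t,\e}$ have the same law on $[t,T]$.

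For the strategy classes, Theorem \ref{thm:filtering2} gives $\F^{\xi^\e}_s = \F^N_s = \F^{\tilde N}_s$. Since $\tilde N$ is the Brownian motion driving $m^{x,\e}$, the set $\A^{\xi^\e}_{t,i}$ coincides with the set of strategies adapted to the natural filtration of the state process $X^{m,t,\e}$ appearing in the full information OSP of the proposition. The two suprema are thus taken over the same family of switching controls, and the payoff functionals agree pathwise, yielding $v_i^\e(t,m) = v_i(t,m)$.

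The main obstacle is the bookkeeping around the filtrations: one needs to verify carefully that the innovation, suitably normalised to unit variance, generates exactly the observation filtration $\F^\e$, so that ``$\F^\e$-adapted strategy for the filtered problem'' and ``strategy adapted to the Brownian filtration driving $X^{m,t,\e}$'' describe the same admissible class. Once this identification is in place, everything else is a direct substitution and the conclusion follows from the standard theory recalled in Section \ref{sec:assandnot}.
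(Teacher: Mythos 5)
Your proposal is correct and takes essentially the same route as the paper: solve the Riccati equation \eqref{eq:variance} to get $\theta^\e_s=\e\tanh(s/\e)$, rewrite the conditional-mean dynamics as $dm^{x,\e}_s=\tanh(s/\e)\,dR_s$ with $R$ the normalised innovation (your $\tilde N=N/\e$ is exactly the paper's $R$), and invoke Theorem \ref{thm:filtering2} to identify $\F^\e$ with the innovation filtration so that both the strategy classes and the payoff functionals coincide.
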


\begin{proof}
Recall that
\begin{equation*} % \label{eq:PIproblem2b}
v_i^\e(t, m)= \sup _{\mu \in A^\xi_{t,i}}  \E \l [ \int _t ^T m^{x,\e}_s  \I_{\{\mu_s =1\}}ds - \sum _{n \geq 1} c_{\mu_{\tau_{n-1}} \mu_{\tau_n}}  \,\vline \, m^{x,\e}_t =m  \r ]
\end{equation*}
where $m^{x,\e}_t= \E \l [W^x_t \, \vline \, \F^\e_t \r ]$. By Theorem \ref{thm:filtering} the dynamics of $m_t^{x,\e}$ is given by \eqref{eq:mean} which, after solving \eqref{eq:variance} and inserting the solution $\theta^\e_t =\e \tanh (\dfrac{t}{\e})$,  reads
%If we assume that the starting point of $X$, i.e. the BM is known (e.g. $=0$) we get $a^2=0$, $K=1$ and $S_0 < \e$. The equation we need to solve is hence
\begin{eqnarray} \label{eq:processes12}
dm^\e_t %& =&  \frac 1 \e \frac { e^{2t / \e}-1}{ e^{2t / \e}+1} d\xi^\e_t  - \frac{1}{\e} \frac {e^{2t / \e}-1}{e^{2t / \e}+1} m_t dt \notag \\
&=&  \frac 1 \e \tanh (\dfrac {t}{\e}) d\xi^\e_t - \frac{1}{\e} \tanh (\dfrac {t}{\e})m^\e_t dt. \hspace{0.5cm} 
\end{eqnarray}
Since the value function \eqref{eq:PIproblem3} is an expected value we can replace the underlying process (and corresponding set of strategies) in our optimal switching problem with any other process having the same distributional properties without changing its value. With this in mind, we introduce the innovations process
\begin{equation}\label{eq:N_trelation}
N_t = \xi_t^\e - \int _0 ^t m^\e_s ds
\end{equation}
and put $dR_t = \frac 1 \e dN_t$. Then, by \eqref{eq:N_trelation} the process $\xi^\e$ admits a represenation of the form
\begin{equation*}
d\xi^\e_t = m^{x,\e}_t dt + \e dR_t.
\end{equation*}
and by Theorem \ref{thm:filtering2} the process $(R_t,\F_t^\e)$ is a Brownian motion. Furthermore, the $\sigma$-algebra $\F_t^R$ generated by $R_t$ coincides with $\F^\e_t$ for all $t \geq0$. 
The above together with \eqref{eq:processes12} and the initial condition $m^{x,\e}_t=m$ gives that
$$
m^{\e}_s = m +\int _t ^s  \tanh (\dfrac {r}{\e})d R_r.
$$
Recall the underlying Brownian motion $(W_t,\F_t)$ and the dynamics \eqref{eq:Xdynamics} of $X^{m,t,\e}$,
$$
X^{m,t,\e}_s = m + \int _t ^s \tanh (\dfrac r \e) dW_r.
$$
The distributional properties of the Brownian motions $(R_t, \F_t^\e)$ and $(W_t,\F_t)$ conincide and hence, since  $\F_t^R=\F^\e_t$ for all $t\geq 0$, it follows that
\begin{eqnarray}\label{eq:gammainserted}
v_i^\e(t,m)&=&  \sup_{\mu \in \A^\xi_i} \E \l [\int _t^T m^{x,\e}_s \I_{\{\mu_s =1\}} ds - \sum_{n\geq 1}  c_{\mu_{\tau_{n-1}} \mu_{\tau_{n}}} \, \vline \, m^{x,\e}_t =m \r] \notag \\
&=& \sup_{\mu \in \A^R_i} \E \l [\int _t^T m^{x,\e}_s \I_{\{\mu_s =1\}} ds - \sum_{n\geq 1}  c_{\mu_{\tau_{n-1}} \mu_{\tau_{n}}} \, \vline \, m^{x,\e}_t =m \r] \notag \\
&=& \sup_{\mu \in \A_i} \E \l [\int _t ^T X^{m,t,\e}_s\I_{\{\mu_s =1\}} ds -  \sum_{n\geq 1}   c_{\mu_{\tau_{n-1}} \mu_{\tau_{n}}} \r] = v_i(t,m)
\end{eqnarray}
and the proof is complete.
\end{proof}

\begin{remark}
Note that although the process $m^{x,\e}_t$ is not explicitly observable, its path is completely determined by the deterministic function $\theta_t$ and the observed process $\xi^\e$. Hence, we can w.l.o.g. consider $m^{x,\e}$ as being the observed process rather than $\xi^\e$. 
\end{remark}

\begin{lemma}\label{lemma:convexity}
For any $\e > 0$, the value function $v_i^\e(t,m)$, $i \in \Q$, is convex in $m$.
\end{lemma}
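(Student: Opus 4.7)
The plan is to exploit the reduction in Proposition \ref{prop:altrepres} and observe that, in the alternative representation, the dependence of the underlying state process on the initial condition $m$ is purely additive. Concretely, the strong solution of \eqref{eq:Xdynamics} can be written
\begin{equation*}
X^{m,t,\e}_s = m + Z^{t,\e}_s, \qquad Z^{t,\e}_s := \int_t^s \tanh\!\bigl(r/\e\bigr)\,dW_r,
\end{equation*}
so that $Z^{t,\e}$ is a process which does not depend on $m$. Moreover, the filtration generated by $X^{m,t,\e}$ coincides with that generated by $W$ on $[t,T]$, since shifting by the constant $m$ does not alter the $\sigma$-algebras; in particular the admissible set $\A^{X}_i$ is independent of $m$.

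Given this, for any fixed admissible $\mu\in\A^{X}_i$ the payoff functional
\begin{equation*}
J(t,m,\mu) = \E\!\left[\int_t^T \bigl(m + Z^{t,\e}_s\bigr) \I_{\{\mu_s=1\}}\,ds - \sum_{n\geq 1} c_{\mu_{\tau_{n-1}}\mu_{\tau_n}}\right]
\end{equation*}
can be rewritten as
\begin{equation*}
J(t,m,\mu) = m \cdot \E\!\left[\int_t^T \I_{\{\mu_s=1\}}\,ds\right] + \E\!\left[\int_t^T Z^{t,\e}_s \I_{\{\mu_s=1\}}\,ds - \sum_{n\geq 1} c_{\mu_{\tau_{n-1}}\mu_{\tau_n}}\right],
\end{equation*}
which is an affine function of $m$ for each fixed $\mu$.

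Since $v_i^\e(t,m)=\sup_{\mu\in\A^X_i} J(t,m,\mu)$ by Proposition \ref{prop:altrepres}, and the pointwise supremum of any family of affine (hence convex) functions is convex, the map $m\mapsto v_i^\e(t,m)$ is convex. The only subtle point, and the step I would verify most carefully, is that the class of admissible strategies $\A^X_i$ does not itself depend on $m$; this is where the affine form $X^{m,t,\e}_s = m + Z^{t,\e}_s$ is essential, as it ensures $\F^{X^{m,t,\e}}_s = \F^{W}_s$ (up to null sets) so that the same family of strategies is competed over for every value of $m$.
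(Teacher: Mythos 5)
Your proof is correct, and it takes a genuinely different (and in fact more economical) route than the paper. Both arguments rest on the same two structural facts: the reduction of Proposition \ref{prop:altrepres} and the additive dependence $X^{m,t,\e}_s = m + Z^{t,\e}_s$ on the initial condition, which makes the admissible class $\A^X_i$ independent of $m$. From there, however, the paper does \emph{not} invoke the ``supremum of affine functions'' principle. Instead it picks an optimal strategy $\mu^\ast$ (whose existence requires Theorem \ref{thm:finstrategy}), uses its sub-optimality at the perturbed initial point to sandwich the difference quotient $\bigl(v_i^\e(t,m+\eta)-v_i^\e(t,m)\bigr)/\eta$ between $f(t,m,i)$ and $f(t,m+\eta,i)$, where $f$ is the expected time spent in state $1$ under the optimal strategy, deduces that $f$ is non-decreasing in $m$, and concludes that the right spatial derivative $D_x^+ v_i^\e$ is non-decreasing, hence convexity. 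Your argument buys simplicity and generality: it needs no optimal strategy (only the supremum), no differentiability considerations, and it delivers for free that $v_i^\e(t,\cdot)$ is non-decreasing and Lipschitz in $m$ with constant $T-t$, since every affine branch has slope $\E\bigl[\int_t^T \I_{\{\mu_s=1\}}\,ds\bigr]\in[0,T-t]$. What the paper's longer route buys is the explicit economic interpretation of the slope as the expected on-time under the optimal strategy and the monotonicity of that quantity in $m$, which is of independent interest. The one point you flag as subtle --- that $\A^X_i$ does not depend on $m$ --- is indeed the crux, and your justification (shifting by a constant does not change the generated $\sigma$-algebras) is exactly right and is implicitly used by the paper as well.
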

\begin{proof}
Recall the characterization of $v_i^\e(t,m)$ given in Proposition~\ref{prop:altrepres}.  Since the dynamics of $X^{m,t,\e}$ is independent of its current value, the resulting process is linear w.r.t. its starting point, i.e.,  $X^{m+ \eta,t,\e}= X^{m,t,\e}+\eta$. 
%\fxnote{As the suprermum of any family of convex functions is convex it is enough to prove that $J(t,m,\mu)$ is convex for any $\mu$ fixed.}
Let $\mu^\ast \in \A^X_i$ be an optimal strategy for $v_i^\e(t,m)$ so that
\begin{equation*}
v_i^\e(t,m) =  \E \l [ \int _t ^T X^{m,t,\e}_s \I_{\{ \mu^\ast_s =1\}}ds - \sum _{n \geq 1} c_{\mu^\ast_{\tau_{n-1}}  \mu^\ast_{\tau_n}} \r].
\end{equation*}
We now perturb the initial condition $m$ by $\eta$ and consider $v_i^\eta(t,m+\eta)$. The strategy $\mu^\ast$ is sub-optimal for $v^\e_i(t,m+\eta)$ and hence 
\begin{align}\label{eq:ineq1}
v_i^\e(t,m+\eta) \geq&  \E \l [ \int _t ^T  X^{m+\eta,t,\e}_s \I_{\{\mu^\ast_s =1\}}ds - \sum _{n \geq 1} c_{\mu^\ast_{\tau_{n-1}} \mu^\ast_{\tau_n}} \r] \notag \\
= &   \E \l [ \int _t ^T X^{m,t,\e}_s \I_{\{\mu^\ast_s =1\}}ds - \sum _{n \geq 1} c_{ \mu^\ast_{\tau_{n-1}}  \mu^\ast_{\tau_n}}  +\eta \int _t ^T  \I_{\{ \mu^\ast_s =1\}}ds \r] \notag \\%\, \vline \, m_t = m+\eta \r ] \notag \\
=& v^\e_i(t,m) + \eta f(t,m,i),
\end{align}
where $f(t,m,i)=\int _t ^T  \I_{\{ \mu^\ast_s =1\}}ds$ is the expected time spent in state $1$, using the optimal strategy for the starting point $(t,m,i)$. Repeating the above for the optimal strategy associated to the initial value $(t, m + \eta)$ yields
$$
v_i^\e(t,m) \geq v_i^\e(t,m+\eta) - \eta f(t,m+\eta,i) .
$$
Since $\int_t ^T \I_{\{\hat \mu_s =1\}}ds \geq 0$ it follows from \eqref{eq:ineq1} that $v_i^\e(t,m)$ is non-decreasing in $m$. Furthermore, combining the inequalities above we find
$$
v_i^\e(t,m) +  \eta f(t,m+\eta,i)  \geq v_i^\e(t,m+\eta) \geq v_i^\e(t,m) + \eta  f(t,m,i) ,
$$
i.e., the time spent online increases with the initial starting point. Dividing by $\eta$ and letting $\eta \to 0$ gives
\begin{align*}
f(t,m,i) \leq \lim_{\eta \searrow  0}\frac{ v^\e_i(t,m+\eta) -v^\e_i(t,m)}{\eta}  \leq  \lim_{\eta \searrow 0}f(t,m+\eta,i).
\end{align*}
Since $f(t,m,i)$ is non-decreasing in $m$, we can conclude that for any $\nu >0$
$$
D_x^+ v^\e_i(t,m) \leq \lim_{\eta \to 0} f(t,m + \eta) \leq f(t,m+\nu) \leq D_x^+ v_i^\e( t,m +\nu),
$$
where $D_x^+v_i^\e(t,m)$ denotes the right spatial derivative of $v_i^\e$ at $(t,m)$. We conclude that the right derivative of $v_i^\e$ is increasing at $m$. Convexity of $v_i^\e(t,m)$ now follows since $m$ was arbitrary. 
\end{proof}

\subsection{Proof of Propostition \ref{prop:monotone}} 
%Since $\A^\xi_{i,t}  \subset  \A_{i,t}$ for any $\e >0$ the first inequality is clear. What remains to show is that $v^{\e_1} \geq v^{\e_2}$ for any $\e_1 \leq \e_2$.  
By combining Propostition \ref{prop:altrepres} and Theorem \ref{thm:varineq} we conclude that the vectors $(v_0^{\e_k},v_1^{\e_k})$, $k\in \{1,2\}$, solve, respectively, the systems of variational inequalities
\begin{align} \label{eq:system}
&\min \l \{ \varphi_0 - (\varphi_1 - c_{01}) , -\partial_t \varphi_0- \L^{\e_k} \varphi_0 - \psi_0 \r \} = 0 \notag \\
&\min \l \{ \varphi_1 - (\varphi_0 - c_{10}) , -\partial_t \varphi_1  -\L^{\e_k} \varphi_1 -\psi_1\r \} =0,
\end{align}
where 
$$\L^{\e_k} =\frac {1}{2} \tanh^2 (\dfrac{t}{\e_k})\partial_{xx}$$ 
is the generator of the process $X^{m,t,\e_k}$. We now intend to prove that $(v_0^{\e_2}, v_1^{\e_2})$ is a subsolution to the system above with $k=1$, i.e., that
\begin{align}\label{eq:system2}
(i)&\min \l \{ v^{\e_2}_0 - (v^{\e_2}_1 - c_{01}) ,  -\partial_t v^{\e_2}_0-\L^{\e_1} v^{\e_2}_0 - \psi_0 \r \} \leq0 \notag \\
(ii)&\min \l \{ v^{\e_2}_1- (v^{\e_2}_0 - c_{10}) ,  -\partial_t v^{\e_2}_1-  \L^{\e_1} v^{\e_2}_1 -\psi_1\r \} \leq0,
\end{align}
%---------------Below with (x,t), parameters inserted
%\begin{align}\label{eq:system2}
%(i)&\min \l \{ v^{\e_2}_1(t,x) - (v^{\e_2}_0(t,x) - c_{10}(t,x)) ,  -\partial_t v^{\e_2}_1(t,x)-\L^{\e_1} v^{\e_2}_1(t,x) - \psi_1(x) \r \} \leq0& \notag \\
%(ii)&\min \l \{ v^{\e_2}_0(t,x) - (v^{\e_2}_1(t,x) - c_{01}(t,x)) ,  -\partial_t v^{\e_2}_0(t,x)-  \L^{\e_1} v^{\e_2}_0(t,x) -\psi_0(x)\r \} \leq0,
%\end{align}
and then apply the comparison principle for \eqref{eq:system}. We focus on \eqref{eq:system2} $(i)$, the inequality in \eqref{eq:system2} $(ii)$ being treated similarly. Firstly, on the region $S_0^{\e_2}$ \eqref{eq:system2} $(i)$ is trivially satisfied since $v_0^{\e_2} = v_1^{\e_2} - c_{01}$ by definition. Hence, we only need to show that on the region $C_0^{\e_2}$, where $v^{\e_2}_0$ is above its obstacle by construction, we have 
\begin{equation}\label{eq:NTSineq}
-\partial_t v_0 ^{\e_2} -\L^{\e_1}v_0^{\e_2} - \psi_0 \leq 0,
\end{equation}
in the viscosity sense. Assume that $\varphi - v_0^{\e_2}$ has a local minimum at $(\hat t, \hat m) \in C_0^{\e_2}$. Since $v_0^{\e_2}$ is a viscosity solution to \eqref{eq:system} with $k=2$ we have
$$
-\partial_t \varphi(\hat t, \hat m) - \L^{\e_2} \varphi(\hat t, \hat m) - \psi_0(\hat t, \hat m) \leq 0.
$$
In particular, we see that $-\partial_t \varphi(\hat t, \hat m)  - \psi_0(\hat t, \hat m) \leq \L^{\e_2} \varphi(\hat t, \hat m)$.
%Since $v_0^{\e_2}$ solves $-\partial_t v_0^{\e_2} -\L^{\e_2} v_0^{\e_2} - f_0 =0$ it follows by standard theory, see, e.g., \cite{K80} Theorem 2.9.10, %\fxnote{Is it a problem here that $\sigma(0)=0$...}
%that $v_0^{\e_2}\in C^{1,2}(C_0^{\e_2})$ and we may hence consider \eqref{eq:NTSineq} in the classical sense. From \eqref{eq:system} we conclude that on the set $C^{\e_2}_0$ we have 
%$$
%-\partial_t v_0^{\e_2} -\L^{\e_2} v_0^{\e_2} - \psi_0 =0,
%$$
%i.e.,  $\psi_0 =-\partial _t v_0^{\e_2}- \L^{\e_2} v_0 ^{\e_2}$. 
Hence, at $(\hat t, \hat m)$ we have
\begin{align}\label{eq:ineq}
&-\partial _t \varphi(\hat t, \hat m)  -\L^{\e_1}\varphi(\hat t, \hat m)  - \psi_0(\hat t, \hat m) \notag \\
& \leq  \L^{\e_2} \varphi(\hat t, \hat m) -\L^{\e_1}\varphi(\hat t, \hat m) \notag \\
& = \frac 12\l (  \tanh^2(\dfrac{\hat t}{\e_2})  -\tanh^2(\dfrac{\hat t}{\e_1})  \r )\partial_{mm} \varphi(\hat t, \hat m).
\end{align}
The monotonicity of $\tanh^2(t)$ for $t \geq 0$ and the assumption $\e_1 \leq \e_2$ gives 
$$\frac 1 2\l(\tanh^2(\dfrac{\hat t}{\e_2})  -\tanh^2(\dfrac{\hat t}{\e_1})\r) \leq 0.$$ 
A function $\varphi$ is convex if and only if it is convex in the viscosity sense, see \cite{ALL97}, and hence it follows from Lemma \ref{lemma:convexity} that $\partial_{mm} \varphi(\hat t, \hat m) \geq0$. We can thus conclude from \eqref{eq:ineq} that \eqref{eq:NTSineq} holds.
This proves \eqref{eq:system2} $(i)$ since $S_0^{\e_2} \cup C_0^{\e_2} = \R$. Repeating the above arguments for $v_1^{\e_2}$ proves \eqref{eq:system2} $(ii)$ and thus the subsolution property is proven. Since $v_i^{\e_1}(T,x) = v_i^{\e_2}(T,x)$, $i \in \Q$, the theorem now follows from the comparison principle for \eqref{eq:system}. %, see, e.g., Theorem ?? of \cite{}.  \qed

\subsection{Proof of Theorem \ref{thm:convergence}}
%By standard estimates for $W^x$ it is clear that  $v^0(t,x) < C (1+ |x|)$. 
That $v_i^0(t,m)\geq v_i^\e(t,m)$ for any $\e >0$ is proven as Proposition \ref{prop:monotone} using $\tanh^2(t) \leq 1$ for any $t \geq 0$. We omit the details. By Theorem \ref{thm:finstrategy} there exists a finite optimal strategy $\mu^\ast \in \A^X_i$ for the full information problem \eqref{eq:FIproblem}. This strategy is sub-optimal in \eqref{eq:gammainserted} and hence
\begin{align} \label{eq:reads}
v_i^0 (t,m)- v_i^\e(t,m) =& \E \l [\int _t ^T W^{m,t}_s\I_{\{\mu^\ast_s =1\}} ds\r ]-  \E \l [\sum_{n\geq 1}   c_{\mu^\ast_{\tau_{n-1}} \mu^\ast_{\tau_{n}}} \r]\notag \\
& - \sup _{\mu \in \A^\xi_{i,t}} \E \l [\int _t ^T X^{m,t,\e}_s\I_{\{\mu_s =1\}} ds -\sum_{n\geq 1}   c_{\mu_{\tau_{n-1}} \mu_{\tau_{n}}} \r]  \notag \\
\leq& \E\l [\int _t ^T  \l( W^{m,t}_s - X^{m,t,\e}_s \r )\I_{\{\mu^\ast_s=1\}}  ds \r], 
\end{align}
where $W_s^{m,t}$, $s \geq t$ is the Brownian motion $W^x$ conditional on $W^x_t=m$.
Following \cite{F78} we introduce the notation 
\begin{align}
Q^\e_{t,s} =&\int_t ^s \l( 1-\tanh (\dfrac{u}{\e}) \r)dW_u \notag \\
N_{t,r}^\e =& \sup_{s \in [t,r]} Q^\e_{t,s}, \notag \\
N_{t,\infty}^\e =& \sup_{s \geq t} Q^\e_{t,s} \notag .
\end{align}
By definition 
$$W^{m,t}_s - X^{m,t,\e}_s =  \int _t ^s  \l(1 - \tanh(\dfrac{r}{\e})\r ) dW_r$$ 
and hence \eqref{eq:reads} yields
\begin{align}
&v_i^0 (t,m)- v_i^\e(t,m) \leq \E \l [ \int _t ^T  Q^\e_{t,s}  \I_{\{\mu^\ast_s=1\}} ds \r] \notag \\
 & \leq \E \l [ \sup _{s \in [t,T]} Q^\e_{t,s}   \int _t ^T \I_{\{\mu^\ast_s=1\}} ds \r]\leq  \E \l[ \sup _{s \geq t} Q^\e_{t,s}   \int _t ^T \I_{\{ \mu^\ast_s=1\}} ds \r] \notag \\
&\leq \E\l[  N^\e_{t,\infty}\r] (T-t).\notag
\end{align}
%
%$$
%\fxnote{$\E \int _t ^T \I_{\{\mu^\ast_s=1\}} ds   \leq \mbox{Something better than (T-t)?}$}
%$$
Since $\E \l[ N_{t,\infty}^\e \r]=  \E \l[ \lim_{r \to \infty} N^\e_{t,r} \r]$ and $N^\e_{t,r}$ is monotone in $r$ we may apply the monotone convergence theorem to find
$$
\E [N_{t,\infty} ^\e] =  \E \l[\lim_{r \to \infty} N^\e_{t,r} \r] = \lim_{r \to \infty} \E \l[ N^\e_{t,r} \r].
$$ 
By the reflection principle it follows that
$$
\E[ N^\e_{t,r}] %=\l ( \frac{2 \E [(Q^\e_t)^2]}{\pi } \r)^{1/2} 
= \l ( \frac{2}{\pi}  \int _t
^s \l(1- \tanh(\dfrac{r}{\e}) \r)^2 dr \r)^{1/2}.
$$
To be more explicit, applying the reflection principle to the stochastic process $Q^\e_{t,s}$  yields $\P(N_{t,r}^\e \geq b) = 2 \P(Q^\e_{t,r} \geq b) = \P(|Q^\e_{t,r}| \geq b)$, i.e., the distribution of $N_{t,r}$ coincides with that of $|Q^\e_{t,r}|$. Furthermore, since $Q^\e_{t,s}=\int_t^s ( 1 - \tanh(\dfrac{u}{\e})) dW_u$ it is normally distributed,
$$Q^\e_{t,s} \sim N(0, \int _t^s  ( 1 - \tanh(\dfrac{u}{\e}))^2 du),$$
and $|Q^\e_{t,s}|$ follows the corresponding half-normal distribution. Hence
$$\E [N^\e_{t,r}] = \E[|Q^\e_{t,r}|] = \l( \frac{2}{\pi} \int _t^r  ( 1 - \tanh(\dfrac{u}{\e})) ^2 du\r)^{1/2}
$$
%Since $Q_{t,s}=\int_t^s ( 1 - \tanh(\dfrac{s}{\e})) dW_s $ it follows by the Ito isometry that 
%\begin{equation}
%\E [{Q^\e_{t,s}}^2] = \int_t ^s ( 1 - \tanh(\dfrac{s}{\e}))^2 ds.
%\end{equation}
and
\begin{align}\label{eq:Ninfty}
\E [N_{t,\infty} ^\e] =& \lim _{r\to \infty} \E [N_{t,r} ^\e] = \lim _{r\to \infty} \l ( \frac{2}{\pi} \int _t 
^r \l(1- \tanh(\dfrac{u}{\e}) \r)^2 du \r)^{1/2} \notag \\
\leq & \lim _{r\to \infty}  \l ( \frac{2}{\pi} \int _0 
^r \l(1- \tanh(\dfrac{u}{\e}) \r)^2 du \r)^{1/2} = \sqrt{ \e}\l( \frac{4}{\pi} \log (2) -\frac{2}{\pi} \r) ^{1/2}.
\end{align}
The result now follows by combining \eqref{eq:reads} and \eqref{eq:Ninfty}. \qed

\section{Numerical example} \label{sec:numerical}
We conclude with a numerical calculation showing some features stemming from the lack of information. In particular, we solve \eqref{eq:system} using the Crank-Nicolson finite difference scheme with linear interpolation at the boundaries. %for details see \cite{CrankNicolsonRef}. 
The value function $v_i^{\e}$ is found %for $\e \in \{2^{-4},1,2^3\}$ and for
using the parameters in Table \ref{table:parameters}\footnote{The payoff function $\psi_1$ is chosen as $\psi_1(x)=10x$ to get a convenient order of magnitude of the value function.}. Recall that the value function $v^\e_i(t,m)$ is given w.r.t. $W^x_0=x$. For ease of exposition we only present numerical results for $v_1^\e(t,m)$ subject to the initial condition $W^x_0=0$. 

The monotonicity proved in Proposition~\ref{prop:monotone} is clearly seen in Figure~\ref{fig:results} and Table~\ref{tab:results}. When the  noise in the observation grows bigger it becomes less and less valuable and the value function tends towards the case of no information, i.e., $\lim_{\e \to \infty} v_1^\e = v_1^\infty(t,m)$ where $v_1^\infty(t,m) = 0$ for $x \leq 0$ and $v^\infty_1(t,m)=(T-t) \psi_1(m) = 10(T-t) m$ for $m> 0$. 

\begin{table}%[H]
\begin{center}

    \begin{tabular}{ | l| l | r |}
    \hline
\textbf{Description} & \textbf{Symbol} &\textbf{Value} \\ \hline
Cost of opening&   $c_{01}$  & 0.01  \\ \hline
Cost of closing & $c_{10}$ & $0.001$  \\   \hline
Running payoff state 1&$\psi_1(x)$&  $10x$  \\ \hline
 Running payoff state 0& $\psi_0(x)$ & 0  \\ \hline
Terminal time & T& 1 \\
\hline
    \end{tabular}
\end{center}
\caption{Parameter values}
\label{table:parameters}
\end{table}

\begin{table}%[H]
\begin{center}
    \begin{tabular}{|l |r|r|r|}
    \hline
\textbf{$v_1^\e(t,m)$} & $m= -0.5$ & $m=0$ & $m=0.5$ \\ \hline
$t=0$ &0.7680/0.0631/-0.001&2.2860/0.7898/0.0575&5.6481/5.0367/5.0000 \\ \hline
$t=0.5$&0.1814/0.0349/-0.001&0.8567/0.5069/0.0396&2.6015/2.5097/2.5000\\ \hline
    \end{tabular}
\end{center}

\caption{Numerical results for $\e=2^{-4} / 1 / 2^3$.}
\label{tab:results}
\end{table}

%
%\begin{figure}
%\centering
%    \includegraphics[width=.9\linewidth]{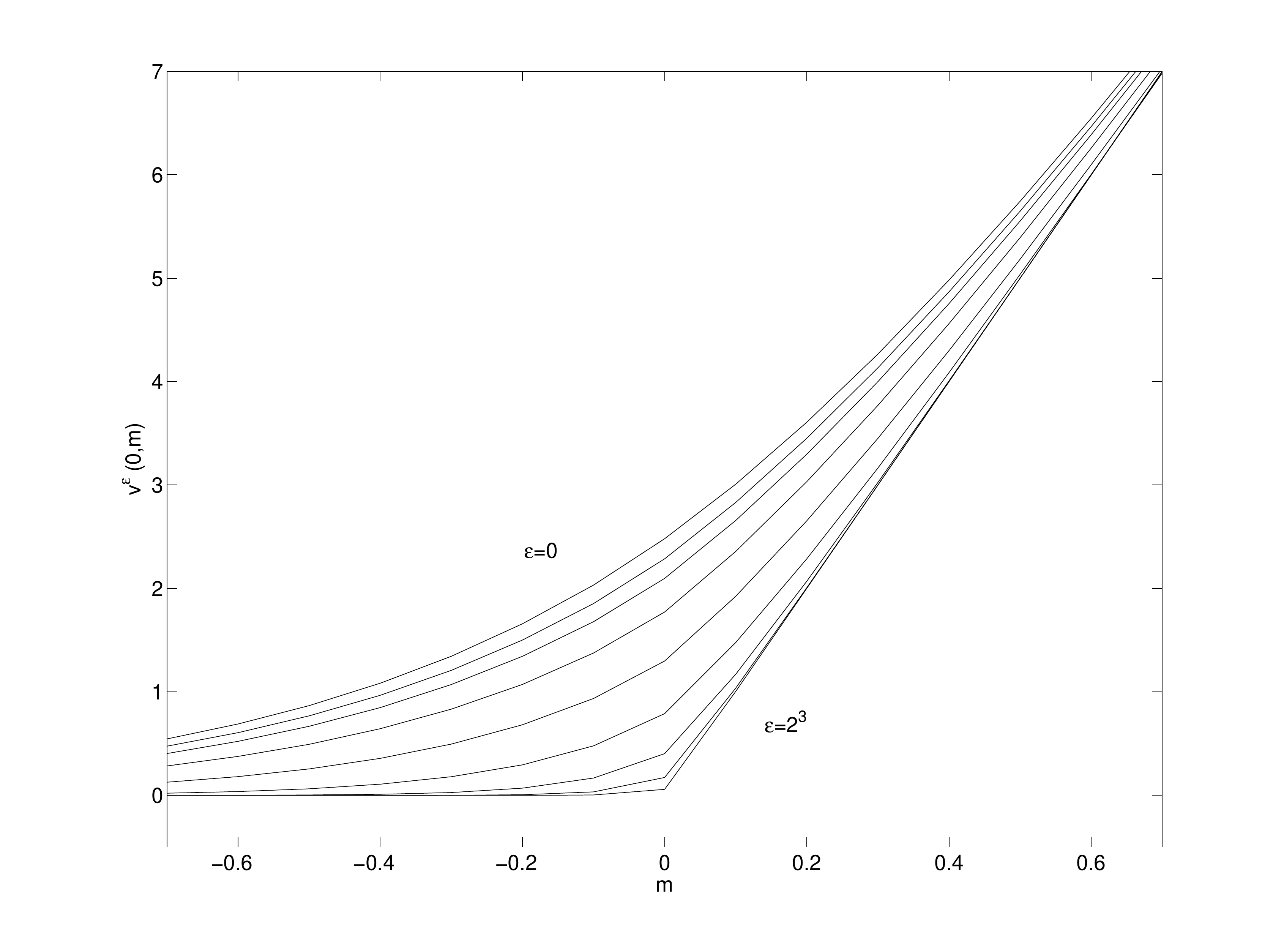}
%      \caption{figure}{$v^\e_1(0,m)$ for $\e \in \{0, 2^-4, \dots, 2^3 \}$}
%\end{figure}
%\begin{figure}
%\centering
%    \includegraphics[width=.9\linewidth]{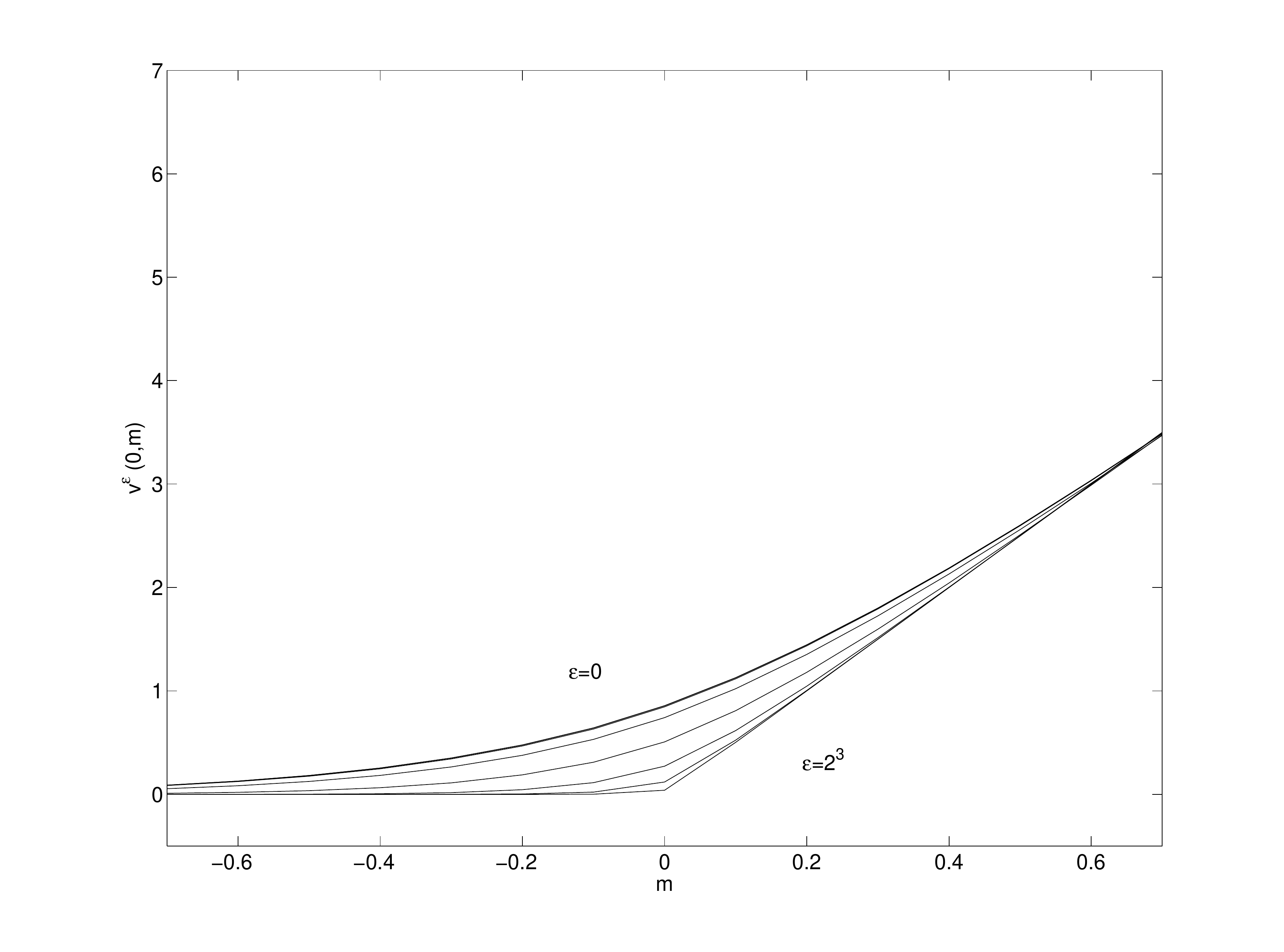}
%      \caption{figure}{$v^\e_1(0.5,m)$ for $\e \in \{0, 2^-4, \dots, 2^3 \}$}
%\end{figure}

\setcounter{figure}{-2}
\begin{figure}%[H]
\centering
\begin{minipage}{.5\textwidth}
  \centering
    \includegraphics[width=1.0\linewidth]{v1attime0}
      \caption{figure}{$(a)~t=0$}
%\label{fig:surfaceplotF0}
\end{minipage}%
\begin{minipage}{.5\textwidth}
\centering
 \includegraphics[width=1.0\linewidth]{v1attime0point5}
    \caption{figure}{$(b)~t=0.5$}
%\label{fig:surfaceplotF1}
\end{minipage}
\caption{$v^\e_1(t,m)$ for $\e \in \{0, 2^{-4}, \dots, 2^3 \}$.}
\label{fig:results}
\end{figure}

\section{Summary and conclusion} \label{sec:conclusions}

In this paper we studied a Brownian optimal switching problem under incomplete information. We showed that the value of information is positive and that the value function converges to the corresponding full information value function when the noise in the observation tends to $0$. Although the problem studied here is simplistic, the results indicate that the method of reducing to a full information problem, a technique successfully used in the study of incomplete information optimal stopping, provides a feasible way of tackling IIOSPs. An interesting and natural continuation of this paper is to study the IIOSP for more general stochastic processes and payoff functions/switching costs, firstly in the general setting of linear Kalman-Bucy filters and ultimately for fully non-linear stochastic filters. %Moreover, the impact of partial information on the optimal strategy is an interesting and open question.
%\section{Extensions without effort}
%\begin{itemize}
%\item Arbitrary number of states
%\item Linear payoff functions
%\end{itemize}
%
%
%\section{Remarks}
%
%
%\begin{remark}
%The following results are easy consequences of Propostion \ref{prop:altrepres} and standard OSP-therory.
%\begin{itemize}
%\item Lipschitz continuity
%\item $C^1$ property across switching regions
%\end{itemize}
%\end{remark}
%
%
%\begin{remark}
%Should be easy to show that the switching/continuations regions are simply connected.
%\end{remark}

\end{document}